\newcommand{\EE}{\mathcal{E}}
\newcommand{\LL}{\mathcal{L}}
\newcommand{\FF}{\mathcal{F}}
\newcommand{\GG}{\mathcal{G}}
\newcommand{\HH}{\mathcal{H}}
\newcommand{\kk}{\mathbb{K}}
\newcommand{\ZZ}{\mathbb{Z}}
\newcommand{\CC}{\mathbb{C}}
\newcommand{\OO}{\mathcal{O}}
\newcommand{\VV}{\mathcal{V}}
\DeclareMathOperator{\coker}{coker}
\DeclareMathOperator{\Hom}{Hom}
\DeclareMathOperator{\Art}{\bf{Art}}
\DeclareMathOperator{\Grpds}{\bf{Grpds}}
\DeclareMathOperator{\Set}{\bf{Sets}}
\DeclareMathOperator{\Spec}{Spec}
\DeclareMathOperator{\Id}{Id}
\DeclareMathOperator{\MC}{MC}
\DeclareMathOperator{\Def}{Def}
\DeclareMathOperator{\Del}{Del}
\DeclareMathOperator{\Mor}{Mor}
\DeclareMathOperator{\Pic}{Pic}
\DeclareMathOperator{\Tot}{Tot}
\newcommand{\h}{\mathfrak{h}}
\newcommand{\g}{\mathfrak{g}}
\newcommand{\m}{\mathfrak{m}}
\newtheorem*{theorems}{Theorem}
\newtheorem{theorem}{Theorem}[section]
\newtheorem{corollary}[theorem]{Corollary}
\newtheorem{proposition}[theorem]{Proposition}
\newtheorem{lemma}[theorem]{Lemma}
\theoremstyle{definition}
\newtheorem{definition}[theorem]{Definition}
\newtheorem{remark}[theorem]{Remark}
\newtheorem{example}[theorem]{Example}
\definecolor{rosso}{RGB}{162,0,0}
\definecolor{verde}{RGB}{0,100,0}
\definecolor{blu}{RGB}{0,0,162}
\title{Deformations of morphisms of sheaves}
\author{Donatella Iacono}
\address{\newline Dipartimento di Matematica,
\newline  Universit\`a degli Studi di Bari Aldo Moro,
\hfill\newline Via E. Orabona 4,
70125 Bari, Italy.}
\email{donatella.iacono@uniba.it}
\author{Elena Martinengo}
\address{
\newline
Dipartimento di Matematica ``Giuseppe Peano'',
\hfill\newline
 Universit\`a degli Studi di Torino,
 \hfill\newline
 via Carlo Alberto 10,
10123 Torino, Italy}
\email{elena.martinengo@unito.it}
\keywords{Deformations and infinitesimal methods, differential graded Lie algebras, functor of Artin rings, deformations of morphisms,  Brill-Noether theory.}
\subjclass[]{14B10, 14B12, 14D15, 13D10.}
\begin{document}
\maketitle 

\begin{abstract}
We analyse infinitesimal deformations of morphisms of locally free sheaves on  a smooth projective variety $X$ over an algebraically closed field of characteristic zero. In particular, we describe a differential graded Lie algebra controlling the deformation problem. As an application, we study infinitesimal deformations of the pairs given by a locally free sheaf and a subspace of its sections with a view towards  Brill-Noether theory.
\end{abstract}

\tableofcontents
\addtocontents{toc}{\protect\setcounter{tocdepth}{1}}

 \section*{Introduction} 
Let $X$ be a smooth projective variety over  an algebraically closed field $\mathbb{K}$ of characteristic zero. Let $\FF$ and $\GG$ be locally free sheaves of $\OO_X$-modules on $X$ and $\alpha: \FF \to \GG$   a morphism of sheaves. 
In this article, we are interested in the infinitesimal deformations of $\alpha: \FF \to \GG$,  where both the sheaves $\FF$ and $\GG$ and the map $\alpha$ can be deformed.
This is equivalent to deform the graph of $\alpha$ as a subsheaf of the direct sum $\FF \oplus \GG$, in such a way that the deformation of  $\FF \oplus \GG$ is given by a deformation of $\FF$ and a deformation of $\GG$. 

Not much is known about them. We tackle this problem using  differential graded Lie algebras (dgLas).
In \cite{Dona.Tesi, Dona.def maps}, the first author investigated  infinitesimal deformations of a holomorphic map gluing the dgLas that control the deformations of  the domain, of the codomain and of the graph in the product.
Here, we apply the same approach to   deformations of a morphism of sheaves. 
Moreover, we extend it to any algebraically closed field $\kk$ of characteristic zero, using semicosimplicial dgLas techniques as developed in  \cite{FMM, FIM}.

The sheaves of dgLas associated to the   infinitesimal  deformations of a sheaf and of the graph inside the direct sum are classically known, each of them forms a semicosimplicial dgLa $\g^\Delta$ and the \v{C}ech functor $H^1_{sc}(\g^{\Delta})$ controls the corresponding deformations. 
A deformation of the morphism $\alpha: \FF \to \GG$ is obtained from the data of the three involved deformations via a totalisation process. Finally, applying the strong tool of 
Hinich's Theorem of descent of Deligne groupoids  \cite{hinich}, 
 we get a specific differential graded Lie algebra that controls the    infinitesimal  deformations of $\alpha: \FF \to \GG$ via the Deligne functor in groupoids. This dgLa $\Tot(H(\VV)^\Delta)$ is the Thom-Whitney dgLa associated to a suitable semicosimplicial dgLa $H(\VV)^\Delta$ (see Section \ref{sezione deformazione mappe} for details).  

\begin{theorems}[Theorem \ref{thm.Del eq Def}]

The functor $\Def_{(\FF, \alpha, \GG)}$ of    infinitesimal  deformations of the morphism $\alpha: \FF \to \GG$ is equivalent to the Deligne functor
$\Del_{\Tot(H(\VV)^\Delta)}$ associated to the Thom-Whitney dgLa of the semicosimplicial dgLa $H(\VV)^\Delta$.
\end{theorems}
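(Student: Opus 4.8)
The plan is to reduce the global statement to a local-to-global comparison mediated by the Thom--Whitney totalisation and Hinich's descent theorem. First I would make the groupoid-valued functor $\Def_{(\FF,\alpha,\GG)}$ explicit: for a local Artinian $\kk$-algebra $A$ with residue field $\kk$, an infinitesimal deformation over $A$ consists of a deformation of $\FF$, a deformation of $\GG$, and a lift of $\alpha$ compatible with both, and by the graph description this is the same as deforming $\Gamma_\alpha \subseteq \FF\oplus\GG$ inside a deformation of $\FF\oplus\GG$ of product type, the arrows of the groupoid being the isomorphisms of such data. I would fix an affine open cover $\UU=\{U_i\}$ of $X$ trivialising $\FF$ and $\GG$ and encode such a deformation as local deformations on the $U_i$ together with gluing isomorphisms on the double intersections satisfying the cocycle condition on the triple intersections.

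Next I would recall from Section~\ref{sezione deformazione mappe} the construction of the semicosimplicial dgLa $H(\VV)^\Delta$: on each intersection $U_{i_0\cdots i_p}$ the deformations of the morphism are governed by a dgLa assembled, through a totalisation of the three sheaves of dgLas controlling $\FF$, $\GG$ and the graph inside $\FF\oplus\GG$, while the semicosimplicial structure is induced by the face maps of the nerve of $\UU$. The content of the construction is that the \v{C}ech functor $H^1_{sc}(H(\VV)^\Delta)$ recovers precisely the gluing data above, so that at the set-valued level it is isomorphic to $\Def_{(\FF,\alpha,\GG)}$.

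I would then pass from the semicosimplicial object to its Thom--Whitney totalisation. By the comparison results of \cite{FMM, FIM} the deformation functor of the semicosimplicial dgLa $H(\VV)^\Delta$ agrees with that of the single dgLa $\Tot(H(\VV)^\Delta)$, and Hinich's theorem of descent of Deligne groupoids \cite{hinich} upgrades this to an equivalence $\Del_{\Tot(H(\VV)^\Delta)} \simeq \operatorname{holim}\Del_{H(\VV)^\Delta}$ between the Deligne functor of the totalisation and the homotopy limit of the local Deligne groupoids over the nerve. Composing these identifications produces the asserted equivalence $\Def_{(\FF,\alpha,\GG)} \simeq \Del_{\Tot(H(\VV)^\Delta)}$.

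The step I expect to be the main obstacle is the comparison at the groupoid level, rather than merely on isomorphism classes: I must verify that the infinitesimal automorphisms and the homotopy-coherent gluing data of the \emph{morphism}---and not only of the two sheaves separately---are faithfully reproduced by the totalisation. Concretely this means checking that the constraint that the ambient deformation of $\FF\oplus\GG$ be of product type is encoded by the correct sub-dgLa, or cone, inside $\Tot(H(\VV)^\Delta)$, and that gauge equivalence there matches isomorphism of deformed morphisms. The independence of the resulting dgLa from the chosen cover $\UU$ should then follow from the functoriality of the Thom--Whitney totalisation with respect to refinements of $\UU$.
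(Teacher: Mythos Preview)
Your strategy is the same as the paper's---Hinich descent plus local-to-global gluing---but you have not isolated the step that actually does the work, and your detour through the set-valued functor $H^1_{sc}$ does not help with the groupoid-level obstacle you correctly flag at the end.

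The paper's argument runs as follows. First it proves, as a separate \emph{local} theorem, that on any affine $V$ trivialising $\FF$ and $\GG$ one has an equivalence of groupoid-valued functors $\Del_{H(V,\FF,\alpha,\GG)}\simeq \Def_{(\FF,\alpha,\GG)(V)}$. This is where the ``product-type'' constraint you worry about is handled: since $H(V,\FF,\alpha,\GG)$ is by definition the Thom--Whitney dgLa of the two-term semicosimplicial Lie algebra $\h^\Delta\colon \Gamma(V,\LL)\oplus\Gamma(V,\EE nd\,\FF\oplus\EE nd\,\GG)\rightrightarrows \Gamma(V,\EE nd(\FF\oplus\GG))$, one applies Hinich's theorem \emph{a first time} to this $\h^\Delta$ to get $\Del_{H(V,\ldots)}\simeq \Tot(\Del_{\h^\Delta})$, and then reads off explicitly that an object of $\Tot(\Del_{\h^\Delta})(A)$ is precisely an automorphism $e^w$ of the trivial deformation of $\FF\oplus\GG$ intertwining the block-diagonal and the graph-preserving trivialisations---exactly the local datum of a deformed morphism when everything is free. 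Only after this does one apply Hinich's theorem \emph{a second time}, to the \v{C}ech semicosimplicial dgLa $H(\VV)^\Delta$, and combine it with the local equivalence (via the fact that levelwise equivalences of semicosimplicial groupoids induce an equivalence on $\Tot$) and with the classical identification $\Def_{(\FF,\alpha,\GG)}\simeq\Tot(\Def_{(\FF,\alpha,\GG)(\VV)^\Delta})$.

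Your outline invokes Hinich only once, for $H(\VV)^\Delta$, and tries to supply the missing link by the set-valued isomorphism $H^1_{sc}(\exp H(\VV)^\Delta)\cong\pi_0\Def_{(\FF,\alpha,\GG)}$. That comparison is true but gives nothing at the groupoid level; to finish you still need $\Del_{H(V_I,\ldots)}\simeq\Def_{(\FF,\alpha,\GG)(V_I)}$ on each multi-intersection, which is exactly the local theorem above. So: drop the $H^1_{sc}$ step, state and prove the affine case first (using Hinich on the two-term $\h^\Delta$ and the explicit description of $\Tot(\Del_{\h^\Delta})$), and then your global argument goes through verbatim.
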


This approach holds for any algebraically closed field $\kk$ of characteristic zero. If we restrict our attention to the field of complex numbers,  we can consider the dgLas associated to  the   Dolbeault resolutions of the relevant sheaves, instead of the \v{C}ech semicosimplicial dgLas and the Thom-Withney construction, as explained in Remark \ref{rmk.sui numeri complessi}.

\smallskip

As an application of our study of the   infinitesimal  deformations of a morphism of sheaves, we analyse the infinitesimal deformations of a locally free sheaf $\EE$ over a smooth projective variety $X$ over $\kk$ together with a linear subspace $U\subseteq H^0(X,\EE)$ of its global sections. Such a pair $(\EE,U)$ is called a coherent system. The study of the moduli space  and deformations of coherent systems is very classical and it is a wide and still very active research field. We refer the reader to the introduction of  \cite{DE Brill-Noether} for more details and references. 
The idea behind our approach is to view any    infinitesimal  deformation of the pair $(\EE,U)$  as an   infinitesimal deformation of the morphism $s: U\otimes \OO_X \to \EE$ of sheaves of $\OO_X$-modules,   induced by the inclusion $U\subseteq H^0(X, \EE)$. For this correspondence, we need the technical assumption that $H^1(X, \OO_X)=0$,  so that the sheaf $U\otimes \OO_X$ has only trivial infinitesimal deformations.  To study  the  infinitesimal  deformations of $s: U\otimes \OO_X \to \EE$, we can directly apply the previous construction to find a suitable semicosimplicial dgLa, whose Thom-Whitney dgLa controls infinitesimal deformations of the pair $(\EE, U)$ via the Deligne functor.

Moreover,  under the assumption that $H^i(X, \OO_X)=0$ for $i=1,2$,
its cohomology groups fit into an exact sequence (see Equation \ref{eq.theorem}), from which we obtain much information about the tangent and the obstructions spaces of the deformation problem of $(\EE, U)$ and a link with the Petri map.
The dgLa we described is quite  involved, but its importance lies on the fact  that it holds over any algebraically closed field  $\mathbb{K}$ of characteristic zero.

In our article  \cite{DE Brill-Noether}, we generalised many  classical results concerning coherent systems of line bundles over a curve, 
to the case of a vector bundle  of  any rank on a smooth projective variety  of   any dimension over $\CC$. 
  Here,  we recover these results over  any algebraically closed field $\kk$ of characteristic zero, under the additional hypothesis that $H^i(X, \OO_X)=0$ for $i=1,2$, see Corollaries \ref{corollario alpha su implica r liscio}, \ref{corcor}, \ref{prop.sezione si estende se cup product va a zero} and \ref{Cor. TDef(E,H0(E))}. This additional hypothesis do not allow us  to generalise  the results obtained over  $\CC$, for curves of genus $g>0$.

The Thom-Whitney dgLa constructed in this way is unfortunately quite complicated  to be handled. For this reason, we even present a more explicit model of a semicosimplicial dgLa whose total complex controls deformations of the pair $(\EE, U)$  (see Section \ref{sezione dgla esplicit model}).

\smallskip

The connection between  the  moduli space of coherent systems and Brill-Noether theory is obviously very close. The classical Brill-Noether theory concerns the subvarieties $W_d^k(C)$  of $\Pic^d(C)$ of line bundles on a curve $C$ of degree $d$ having at least $k+1$ independent  sections and   much is classically known about it.
During the last years, several generalisations of this problem were investigated. The general case of vector bundles  on varieties of higher dimension is still quite mysterious and many generalisations are studied just over the field of complex numbers. A parallel approach to this problem is the 
one based on dgL pairs, used to locally analyse  the cohomology jump loci of some moduli spaces over any field $\kk$ \cite{Budur-Wang, Budur-Rubio, Budur} 

We decide to follow a different approach.
Note that the functor associated to the Brill-Noether loci, or to any deformation problem with cohomological  constraints, is intrinsically more difficult to be studied, since it is not a deformation functor.
Our idea is to get as much information as possible on  infinitesimal deformations of a pair $(\EE, U)$, that we tackle using dgLas even over any algebraically closed field  $\mathbb{K}$ of characteristic zero. 
Then, using the natural link between coherent systems and Brill-Noether theory, we deduce some information on the last deformation problem. In our point of view this approach is very natural and very explicit and takes advantage from the very powerful tool of dgLas in deformation theory.

  In particular, we are able to recover the results 
of  \cite{DE Brill-Noether} about the  infinitesimal deformations of $\EE$ with at least $k$ independent sections (see Section \ref{sezione finale defo con sezioni}) for any algebraically closed field $\kk$ of characteristic zero, adding the hypothesis that $H^i(X, \OO_X)=0$ for $i=1,2$. This additional hypothesis do not allow us  to generalise  the results obtained over  $\CC$, for curves of genus $g>0$.

\smallskip

The paper is organised as follows.   
With the aim of providing an  introduction to the subject, we include   the notion of differential graded Lie algebras, semicosimplicial dgLas and the associated deformation and Deligne functors in Section \ref{sezione preliminare}.

In Section   \ref{sezione deformazione mappe}, we review the notion of infinitesimal deformations of a morphism of locally free sheaves and we describe a dgLa that controls these deformations, as the Thom-Whitney dgLa of a  specific  semicosimplicial dgLa. 

Section \ref{sezione deformazioni (E,U)} is devoted to the infinitesimal deformations of a locally free sheaf together with a subspace of sections, viewed as deformations of a morphism of sheaves. We actually provide a second dgLa which can be constructed explicitly.

Finally, in the last section, we apply our results to study infinitesimal deformations of a locally free sheaf together with at least a prescribed number of independent sections.

\medskip
 
Throughout the paper, we work over an algebraically closed field  $\mathbb{K}$ of characteristic zero;
$\Set$ denotes the category of sets in a fixed universe and $\Art_{\kk}$ the category of local Artinian $\kk$-algebras with residue field $\kk$. For an element $A \in\Art_{\kk} $,  its maximal ideal is indicated by $\m_A$.

\subsection*{Acknowledgements}  Both authors  wish to thank Marco Manetti for useful discussions. 
  We thank the anonymous referee for the careful reading of our manuscript and for his
insightful comments and suggestions, that helped avoiding inaccuracies and improving the quality of the paper.

\section{Preliminaries}\label{sezione preliminare}

\subsection{Differential graded Lie algebras, deformation and Deligne functors}
In this subsection, we introduce the basic definitions and properties of differential graded Lie algebras, together with their associated deformation and Deligne functors. 
 
For full details, we refer the reader to  \cite{Man Pisa, Man Roma, ManettiSeattle, Manetti.LMDT}.
\medskip

\begin{definition}
A \emph{differential graded Lie algebra}, briefly a \emph{dgLa}, is the data $(L,d,[\ ,\ ])$, where $L=\bigoplus_{i\in \mathbb {Z}} L^i$ is a $\mathbb  Z$-graded vector space over $\kk$, $d:L^i \rightarrow L^{i+1}$ is a linear map, such that $d \circ d=0$, and $[\ ,\ ]:L^i \times L^j \rightarrow L^{i+j}$ is a bilinear map, such that:
\begin{itemize}
\item $[\ ,\ ]$ is graded skewsymmetric, i.e., $[a,b]=-(-1)^{\deg a\deg b}[b,a]$,  
\item $[\ ,\ ]$ verifies the graded Jacoby identity, i.e., $[a,[b,c]]=[[a,b],c]+(-1)^{\deg a\deg b}[b,[a,c]]$,
\item $[\ ,\ ]$ and $d$ verify the graded Leibniz's rule, i.e., $d[a,b]=[da,b]+(-1)^{\deg a}[a,db]$, 
\end{itemize}
for every $a, b$ and $c$ homogeneous elements.
\end{definition}

\begin{definition}
Let $(L,d_L, [\ ,\ ]_L)$ and $(M, d_M, [\ ,\ ]_M)$ be two dgLas, a \emph{morphism of dglas} $\varphi:L\to M$ is a degree zero linear morphism that commutes with the brackets and the differentials.  

A \emph{quasi-isomorphism} of dgLas is a morphism of dgLas that induces an isomorphism in cohomology. 
\end{definition}

\begin{definition}\label{defi thom with coppia morfismi}
The \emph{Thom-Whitney homotopy fibre product} of two morphisms of dgLas $h:L \to M$ and $g:N\to M$ is 
the differential graded Lie algebra defined as
\[ L \times_M N :=\{(l,n,m(t,dt)) \in L\times N \times M[t,dt] \mid m(0)=h(l), \  m(1)=g(n)   \}.   \]
 Here, we denote by $M[t,dt]$ the dgLa $M\otimes K[t,dt]$, where $t$ has degree zero, $dt$ has degree $1$ and $d^2 t=0$ as well as $(dt)^2=0$. Moreover, $m(0)$ and $m(1)$ denotes the evaluation in $t=0$ and $t=1$, respectively.
 \end{definition}

\begin{definition}
Let $L$ be a nilpotent differential graded Lie algebra, we define:
$$\Def(L)=\frac{\MC(L)}{\sim_{\textrm{gauge}}},$$
where:
$$\MC(L)=\left\{x\in L^1 \mid dx+\frac{1}{2}[x,x]=0\right\}$$
is the set of the Maurer-Cartan elements,
and the gauge action is the action of $\exp (L^0)$ on $\MC(L)$, given by:
$$e^a * x= x+\sum_{n=0}^{+\infty} \frac{([a,-])^n}{(n+1)!}([a,x]-da).$$

If $L$ is any dgLa, we define the \emph{deformation functor associated to $L$} as the functor \[\Def_L: \Art_{\kk}\to \Set,\] 
that associates to every $A \in \Art_\kk$ the set \[\Def_L(A) := \Def(L\otimes \m_A).\]
\end{definition}  

We recall that the tangent space to the deformation functor $\Def_L$ is the first cohomology space $H^1(L)$ of the dgLa $L$. Moreover, a complete obstruction theory for the functor $\Def_L$ can be naturally defined and its obstruction space is the second cohomology space $H^2(L)$ of the dgLa $L$. 

If the functor of deformations of a geometric object $\mathcal{X}$ is isomorphic to the deformation functor associated to a dgLa $L$, then we say that $L$ \emph{controls} the deformations of $\mathcal{X}$. 

Any morphism $\varphi: L \to M$, induces a morphism $\varphi: \Def_L \to \Def_M$, that is an isomorphism, whenever $\varphi$ is a quasi-isomorphism.

\begin{definition}
A \emph{small category} is a category whose morphisms form a set. 

A \emph{groupoid} is a small category such that every morphism is an isomorphism. We denote the category of groupoids  by $\Grpds$.

For every groupoid $G$, 
the set of isomorphism classes of objects is denoted by  $\pi_0(G)$.

\end{definition}

Let $L$ be a nilpotent dgLa, we define $C(L)$ as the groupoid whose set of objects is $\MC(L)$ and whose morphisms between two objects $x$ and $y$ are defined as the set
\[ \Mor_{C(L)}(x,y) = \{ e^a \in \exp(L^0) \mid e^a * x = y\}. \]
The \emph{irrelevant stabilizer} of a Maurer-Cartan element $x\in\MC(L)$ is the normal subgroup 
\[ I(x) = \{ e^{du + [x,u]} \mid u \in L^{-1} \} \subseteq\Mor_{C(L)}(x,x). \]

\begin{definition}\label{def Deligne groupoid}
The \emph{Deligne groupoid} of a nilpotent differential graded Lie algebra $L$ is the groupoid $\Del(L)$ having as objects the Maurer-Cartan elements of $L$ and as morphisms 
\[ \Mor_{\Del(L)}(x,y) = \frac{\Mor_{C(L)}(x,y)}{I(x)} = \frac{\Mor_{C(L)}(x,y)}{I(y)},   \]
where the second equality is a natural isomorphism (see \cite[Lemma 6.5.5]{Manetti.LMDT}).

If $L$ is any dgLa, we define the \emph{Deligne functor} 
\[\Del_L:\Art_\kk \to \Grpds,\]
as the functor that associates to every $A \in \Art_\kk$ the groupoid 
\[\Del_L(A):= \Del(L\otimes \m_A).\] 
It is immediate from the definitions that there is an equivalence of functors: 
\[ \pi_0(\Del_L)\cong  \Def_L.\]

\end{definition}

\begin{remark}
Note that, if the nilpotent dgLa $L$ is concentrated in non negative degrees, then the irrelevant stabilisers are trivial   and the morphisms of the Deligne groupoid $\Del(L)$ coincide with the ones induced by the gauge action.
\end{remark}

\subsection{Semicosimplical differential graded Lie algebras}
Here, we recall some preliminaries on the  semicosimplicial dgLas, their total object and the deformation functors
 associated to them. We mainly follow \cite{FMM,FIM}, see also  \cite{Manetti.LMDT}.

\begin{definition}
A \emph{semicosimplicial differential graded Lie algebra} is a
covariant functor $\mathbf{\Delta}_{\operatorname{mon}}\to
\mathbf{DGLA}$, from the category
$\mathbf{\Delta}_{\operatorname{mon}}$, whose objects are finite
ordinal sets and whose morphisms are order-preserving injective
maps between them, to the category of dgLas. Equivalently, a
semicosimplicial dgLa ${\mathfrak g}^\Delta$ is a diagram
 \[
\xymatrix{ {{\mathfrak g}_0}
\ar@<2pt>[r]\ar@<-2pt>[r] & { {\mathfrak g}_1}
      \ar@<4pt>[r] \ar[r] \ar@<-4pt>[r] & { {\mathfrak g}_2}
\ar@<6pt>[r] \ar@<2pt>[r] \ar@<-2pt>[r] \ar@<-6pt>[r]&
\cdots},
\]
where each ${\mathfrak g}_i$ is a dgLa, and for each
$i>0$, there are $i+1$ morphisms of dgLas
\[
\partial_{k,i}\colon {\mathfrak g}_{i-1}\to {\mathfrak
g}_{i},
\qquad k=0,\dots,i,
\]
such that
$\partial_{k+1,i+1}\partial_{l,i}=\partial_{l,i+1}\partial_{k,i}$,
for any
$k\geq l$.

A \emph{morphism of semicosimplicial differential graded Lie algebras} $f:{\mathfrak g}^\Delta \to {\mathfrak h}^\Delta$, is given  by a sequence $\{f_i: {\mathfrak g}_i \to {\mathfrak h}_i$\}  of morphisms of dgLas, commuting with the maps $\partial_{k,i}$.
\end{definition}
 
In particular, every $ {\mathfrak
g}_{i}$ is a vector space and so we can consider 
 the graded vector space $\bigoplus_{n\ge 0}\mathfrak{g}_{n}[-n]$ that has two
differentials, i.e.,
\[ d=\sum_{n}(-1)^nd_n,\qquad \text{where}\quad d_n
\text{ is the differential of }\mathfrak{g}_{n},
\]
and
\[
\partial=\sum_{i}\partial_i,\qquad \text{where}
\quad \partial_i=\partial_{0,i}-\partial_{1,i}+\cdots+(-1)^{i}
\partial_{i,i}.
\]

Note that $d\partial+\partial d=0$, thus the graded vector space  $\bigoplus_{n\ge 0}\mathfrak{g}_n[-n]$, endowed
with the differential $D=d+\partial$, is a complex, called the \emph{total complex}, but it can not be endowed with a structure of dgLa.

However, there is a dgLa that is naturally associated to any semicosimplicial dgLa and that is quasi-isomorphic to the total complex. It is constructed as follows. For every $n\ge 0$, we denote by $\Omega_n$ the differential graded
commutative algebra of polynomial differential forms on the
standard $n$-simplex $\Delta^n$:
\[ \Omega_n=\frac{\kk[t_0,\ldots,t_n,dt_0,\ldots,dt_n]}{(\displaystyle \sum_{i=0}^n t_i-1,\displaystyle \sum_{i=0}^n dt_i)}.\]

\begin{definition}
The \emph{Thom-Whitney dgLa} associated to the semicosimplicial dgLa $\g^{\Delta}$ is
\[\operatorname{Tot}(\mathfrak{g}^\Delta) =\{ (x_n)_{n}\in \prod_n \Omega_n\otimes {\mathfrak g}_n
\mid \delta^{k,n}x_n= \partial_{k,n}x_{n-1}\quad \forall\; 0\le k\le n\},\]
where, for $k=0,\ldots,n$, $\delta^{k,n}\colon \Omega_n\to \Omega_{n-1}$ are the face maps and $\partial_{k,n}\colon \g_{n-1}\to \g_n$ are the maps of the semicosimplicial dgLa $\g^{\Delta}.$

 As already mentioned, $\Tot(\g^\Delta)$ is quasi-isomorphic, as  graded vector space, to the total complex $\left(\bigoplus_{n\ge 0}\mathfrak{g}_n[-n], d + \partial\right)$. 
\end{definition}

 \begin{remark}
 Let $h:L \to M$ and $g:N\to M$ be two morphisms of dgLas and consider the semicosimplicial dgLa
  \[
 \xymatrix{ L\oplus N
\ar@<2pt>[r]^h\ar@<-2pt>[r]_g & M
      \ar@<4pt>[r] \ar[r] \ar@<-4pt>[r] &0
\ar@<6pt>[r] \ar@<2pt>[r] \ar@<-2pt>[r] \ar@<-6pt>[r]&
\cdots}.
\]
 In this case, the associated Thom-Whitney dgLa  is nothing else that the 
 Thom-Whitney homotopy fibre product of Definition \ref{defi thom with coppia morfismi}.
 \end{remark}

\begin{example}
Given a sheaf ${\mathcal L}$ of dgLas on a topological space $X$ and an open cover $\mathcal{V}=\{V_i\}_i$ of $X$, the \v{C}ech semicosimplicial  dgLa ${\mathcal L}({\mathcal V})$ is given by:
\[
\xymatrix{ {\prod_i\mathcal{L}(V_i)}
\ar@<2pt>[r]\ar@<-2pt>[r] & {
\prod_{i<j}\mathcal{L}(V_{ij})}
      \ar@<4pt>[r] \ar[r] \ar@<-4pt>[r] &
      {\prod_{i<j<k}\mathcal{L}(V_{ijk})}
\ar@<6pt>[r] \ar@<2pt>[r] \ar@<-2pt>[r] \ar@<-6pt>[r]& \cdots},
\]
where, as usual, $V_{ij}=V_i \cap V_j$, $V_{ijk}=V_i \cap V_j \cap V_k$ and so on, denote the intersections, $\mathcal{L}(V_i)=\Gamma(V_i, \mathcal{L})$ stays for the sections and  the morphisms $\partial_{k,i}$ are the restriction maps.
Here, the  total complex associated to
 the  \v{C}ech semicosimplicial dgLie algebra  
$\mathcal{L}(\mathcal{V})$
is  the \v{C}ech
complex $\check{C}(\mathcal{V},\mathcal{L})$ of the sheaf $\mathcal{L}$. In particular, by the quasi isomorphism between the total complex and the Thom-Whitney dgLa, we have  $H^k(\Tot(\mathcal{L}(\mathcal{V}))) \cong \check{H}^k(\mathcal{V}, \mathcal{L})$, for all $k \in \ZZ$.

\end{example}

\bigskip

According to \cite{FMM,FIM}, we define the following deformation functor associated to a semicosimplicial dgLa.

\begin{definition} \label{Def Zsc}
Let $\mathfrak g^{\Delta}$ be a semicosimplicial dgLa. The functor
\[Z^1_{\rm sc}(\exp \g^{\Delta})    :
\mathbf{Art}_{\mathbb K} \to \mathbf{Set}\] is defined, for all
$A\in \mathbf{Art}_{\mathbb K}$, by
\[ Z^1_{\rm sc}(\exp \g^{\Delta})(A)= \left\{ (l,m)\in
(\g_0^1 \oplus \g^0_1) \otimes \mathfrak m_A \left|
\begin{array}{l}
dl+\frac{1}{2}[l,l]=0,\\ \partial_{1,1}l=e^{m}*\partial_{0,1}l, \\
{\partial_{0,2}m} \bullet {-\partial_{1,2}m} \bullet
{\partial_{2,2}m} =dn+[\partial_{2,2}\partial_{0,1}l,n]\\
\qquad\qquad\qquad\qquad \text{for some $n\in {\mathfrak g}_2^{-1}\otimes{\mathfrak m}_A$}
\end{array} \right.\right\},\]
where the symbol $ \bullet$ stands for the Baker Campbell Hausdorff product in a Lie algebra.
\end{definition}

Two elements
$(l_0,m_0)$ and $(l_1,m_1) \in Z^1_{\rm sc} (\exp \g^{\Delta})(A)$
are equivalent under the relation  $\sim$ if  and only if there
exist elements $a \in \g^0_0\otimes \m_A$ and $b\in{\mathfrak
g}_1^{-1}\otimes{\mathfrak m}_A$ such that
\begin{equation} \label{def rel equiv}
\begin{cases}
e^a * l_0=l_1\\
- m_0\bullet -\partial_{1,1}a \bullet m_1
\bullet \partial_{0,1}a=db+[\partial_{0,1}l_0,b].
\end{cases}
\end{equation}

\begin{definition} \label{def.h1sc}
Let $\g^\Delta$ be a semicosimplicial dgLa, the functor
\[ H^1_{\rm sc}(\exp \g^\Delta)  : \mathbf{Art}_{\mathbb K}
 \to \mathbf{Set}\]
is defined, for all $A\in \mathbf{Art}_{\mathbb K}$, by
\[H^1_{\rm sc}(\exp \g^\Delta)(A)= \frac{Z^1_{\rm sc}
(\exp \g^\Delta)(A)}{\sim}.\]
\end{definition}

 Note that any morphism of semicosimplicial dgLas $\g^{\Delta} \to \h^{\Delta}$ induces a natural transformation of functors $H^1_{\rm sc}(\exp \g^\Delta) \to H^1_{\rm sc}(\exp \h^\Delta)$.

\begin{remark}
If $\g^\Delta$ is a semicosimplicial  Lie
algebra, i.e., every dgLa ${\mathfrak g}_i$ is concentrated in degree zero, then the functor
$H^1_{\rm sc}(\exp \g^\Delta)$ reduces to
the one defined in \cite{FMM}.

Moreover, if $\g^\Delta$ is a semicosimplicial dgLa, such that every dgLa is concentrated in non negative degrees, 
we can easily view the above functor as a functor in groupoids. More explicitly, we have
\[ \widetilde{H}^1_{\rm sc}(\exp \g^\Delta): \Art_\kk \to \Grpds, \]
where, for any $A\in \Art_\kk$, the set of object is  
 \begin{equation} \label{def Z1 caso gradi non neg}
Z^1_{\rm sc}(\exp \g^{\Delta})(A)= \left\{ (l,m)\in
(\g_0^1 \oplus \g^0_1) \otimes \mathfrak m_A \left|
\begin{array}{l}
dl+\frac{1}{2}[l,l]=0,\\ \partial_{1,1}l=e^{m}*\partial_{0,1}l, \\
{\partial_{0,2}m} \bullet {-\partial_{1,2}m} \bullet
{\partial_{2,2}m} =0\\
\end{array} \right.\right\},\end{equation}
and the isomorphisms between two objects 
$(l_0,m_0)$ and $(l_1,m_1) \in Z^1_{\rm sc} (\exp \g^{\Delta})(A)$
are given by the elements $a \in \g^0_0\otimes \m_A$ as above, i.e., such that
\begin{equation} \label{def rel equiv caso gradi non neg}
\begin{cases}
e^a * l_0=l_1\\
- m_0\bullet -\partial_{1,1}a \bullet m_1
\bullet \partial_{0,1}a=0.
\end{cases}\end{equation}
\end{remark}

 We recall the following result \cite[Theorem 4.10]{FIM}, that relates these functors with the ones associated to the dgLas.
\begin{theorem}   \label{teo.H1sc def functors}
Let $\g^\Delta$ be a semicosimplicial dgLa, such that $H^j(\g^i)=0$ for all $i\geq 0$ and $j <0$. Then, there is an equivalence of functors:
\[
\Def_{\Tot(\g^{\Delta})} \cong H^1_{\rm sc}(\exp \g^\Delta).
\]
In particular, 
  the functor $H^1_{\rm sc}(\exp \g^\Delta)$ is a deformation functor, its tangent space is $H^1(\Tot(\g^{\Delta}))$ and its obstructions are contained in $H^2(\Tot(\g^{\Delta}))$.
\end{theorem}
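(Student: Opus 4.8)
The plan is to establish the equivalence by writing down an explicit natural transformation $\Phi\colon \Def_{\Tot(\g^\Delta)}\to H^1_{\rm sc}(\exp\g^\Delta)$ and then proving it is an isomorphism simplex by simplex, the vanishing hypothesis $H^j(\g^i)=0$ for $j<0$ being exactly what guarantees that the construction extends beyond the first two simplicial degrees. The main tool throughout is the bigrading of a Thom--Whitney element $(x_n)_n\in\prod_n\Omega_n\otimes\g_n$ by simplicial degree $n$ and polynomial form-degree in $\Omega_n$, together with the classical dictionary (integration over $\Delta^1$) between Maurer--Cartan elements of $(\g_1\otimes\m_A)[t,dt]$ and the gauge action on $\MC(\g_1\otimes\m_A)$.

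First I would unwind a Maurer--Cartan element $x=(x_n)_n$ of $\Tot(\g^\Delta)\otimes\m_A$. Since $\Omega_0=\kk$, the component $x_0$ is an element $l\in\g_0^1\otimes\m_A$ and the Maurer--Cartan equation on $\Omega_0\otimes\g_0$ forces $dl+\frac12[l,l]=0$, the first condition defining $Z^1_{\rm sc}(\exp\g^\Delta)(A)$. The component $x_1\in\Omega_1\otimes\g_1\otimes\m_A$, viewed under $\Omega_1\cong\kk[t,dt]$ as a Maurer--Cartan element of $(\g_1\otimes\m_A)[t,dt]$, has face restrictions $x_1(0)=\partial_{0,1}l$ and $x_1(1)=\partial_{1,1}l$ at the two vertices; integrating its $dt\,b(t)$-component along $\Delta^1$ produces $m\in\g_1^0\otimes\m_A$ with $\partial_{1,1}l=e^m*\partial_{0,1}l$, the second condition. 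Finally the component $x_2\in\Omega_2\otimes\g_2\otimes\m_A$, whose three face restrictions are prescribed by $m$ on the edges of $\Delta^2$, yields upon integration an element $n\in\g_2^{-1}\otimes\m_A$ realising the coherence identity $\partial_{0,2}m\bullet-\partial_{1,2}m\bullet\partial_{2,2}m=dn+[\partial_{2,2}\partial_{0,1}l,n]$, the third condition. Thus $\Phi(x)=(l,m)$, and a parallel bookkeeping shows that the gauge action on $\Tot(\g^\Delta)$, governed by its degree-$0$ part which decomposes into $a\in\g_0^0\otimes\m_A$ and $b\in\g_1^{-1}\otimes\m_A$, descends precisely to the relation $\sim$ of Equation \eqref{def rel equiv}; hence $\Phi$ is well defined on $\Def_{\Tot(\g^\Delta)}$.

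To see that $\Phi$ is bijective I would argue in the reverse direction. Given $(l,m)\in Z^1_{\rm sc}(\exp\g^\Delta)(A)$ together with the witness $n$, I would reverse the integration to produce $x_0,x_1,x_2$ realising the prescribed data over the first three simplices, and then extend to $x_n$ for all $n\ge 3$ by induction on the simplicial skeleton. The obstruction to extending a compatible family over the $n$-skeleton to the $n$-th simplex lies in a negative-degree cohomology group $H^{<0}(\g_n)$ of the $n$-th dgLa---this is forced by the acyclicity of the polynomial de Rham algebra $\Omega_n$ of the simplex, which collapses all positive form-degree directions and leaves only negative internal degrees---so the hypothesis $H^j(\g^i)=0$ for $j<0$ makes every such extension possible. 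The same acyclicity plus vanishing shows that two Thom--Whitney elements with $\sim$-equivalent images are gauge equivalent, again by building the connecting gauge element simplex by simplex. This is the technical heart of the argument: the vanishing of negative cohomology is exactly what makes the higher simplicial and form-degree data contractible, so that a Maurer--Cartan class is determined, up to gauge, by its truncation $(l,m)$ to degrees $0$ and $1$ together with the degree-$2$ coherence witness $n$.

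The main obstacle is therefore this inductive extension-and-descent over the higher simplices, where one must keep careful track of the Baker--Campbell--Hausdorff bookkeeping so that each reconstructed $x_n$ satisfies both the face-compatibility $\delta^{k,n}x_n=\partial_{k,n}x_{n-1}$ and the Maurer--Cartan equation in $\Omega_n\otimes\g_n$ simultaneously. Once the equivalence $\Def_{\Tot(\g^\Delta)}\cong H^1_{\rm sc}(\exp\g^\Delta)$ is in place, the final assertions are immediate: for any dgLa $L$ the functor $\Def_L$ has tangent space $H^1(L)$ and obstructions in $H^2(L)$, and since $\Tot(\g^\Delta)$ is quasi-isomorphic to the total complex, these groups are the cohomology of $\big(\bigoplus_{n\ge 0}\g_n[-n],\,d+\partial\big)$, computed by the associated \v{C}ech-type spectral sequence. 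Transporting this structure along $\Phi$ shows that $H^1_{\rm sc}(\exp\g^\Delta)$ is a deformation functor with the stated tangent and obstruction spaces.
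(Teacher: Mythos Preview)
The paper does not supply a proof of this theorem: it is stated as a recalled result and attributed to \cite[Theorem 4.10]{FIM}. So there is no in-paper argument to compare your proposal against.

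That said, your outline is broadly in the spirit of the proof in the cited reference. The strategy there also hinges on the dictionary between Maurer--Cartan elements of $L[t,dt]$ with prescribed endpoints and gauge equivalences in $L$, and on the acyclicity of the polynomial de Rham algebras $\Omega_n$ to control the higher simplicial components; the vanishing hypothesis $H^j(\g^i)=0$ for $j<0$ enters precisely to kill the obstructions to extending and to comparing Thom--Whitney elements beyond the low-degree truncation. Your identification of this inductive extension over the simplicial skeleton as the technical heart is correct.

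Where your sketch remains genuinely incomplete is in the bookkeeping you yourself flag: asserting that the extension obstruction ``lies in a negative-degree cohomology group $H^{<0}(\g_n)$'' is the right slogan, but turning it into a proof requires an explicit homotopy-theoretic argument (in \cite{FIM} this is organised via truncations $\tau_{\ge 0}\g_n$ and the contractibility of the relevant simplicial sets, rather than a bare obstruction-theory induction). Likewise, the claim that the gauge relation on $\Tot(\g^\Delta)$ descends exactly to the relation $\sim$ of Equation~\eqref{def rel equiv} needs the irrelevant stabilisers to be handled, which again uses the negative-degree vanishing. These are not errors in your approach, but they are the places where a reader would ask for the actual argument rather than the assertion.
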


In particular, if  the functor of deformations of a  geometric object $\mathcal{X}$ is isomorphic to the deformation 
functor $H^1_{\rm sc}(\exp \g^\Delta)$ associated to a  semicosimiplicial dgLa $\g^\Delta$, then  the Thom-Whitney dgLa ${\Tot(\g^{\Delta}})$  controls the deformations of $\mathcal{X}$.

\begin{example} \label{ex. def E via semicosimpl}
Let $\EE$ be a locally free sheaf of $\OO_X$-modules on a projective variety $X$. Denote by $\EE nd (\EE)$ the sheaf of  $\OO_X$-modules endomorphisms of $\EE$. It is a classical fact that $\EE nd (\EE)$ encodes all the information of the infinitesimal deformations of $\EE$. Over the field of  complex numbers $\CC$, it can be rephrased saying that the dgLa $A_X^{0,*}(\EE nd(\EE))$ controls the deformations of $\EE$ via the Maurer-Cartan functor modulo the gauge equivalence \cite{Fuk}. Over  any algebraically closed field  $\mathbb{K}$ of characteristic zero,  one can consider the \v{C}ech semicosimplicial Lie algebra associated to the sheaf $\EE nd (\EE)$ and with an open affine cover $\mathcal{V}=\{V_i\}_i$ of $X$: 
\[ \EE nd (\EE)(\mathcal V): 
\xymatrix{ {\prod_i\EE nd (\EE)(V_i)}
\ar@<2pt>[r]\ar@<-2pt>[r] & {
\prod_{i<j}\EE nd (\EE) (V_{ij})}
      \ar@<4pt>[r] \ar[r] \ar@<-4pt>[r] &
      {\prod_{i<j<k}\EE nd (\EE)(V_{ijk})}
\ar@<6pt>[r] \ar@<2pt>[r] \ar@<-2pt>[r] \ar@<-6pt>[r]& \cdots}.
\]
In \cite{FMM} the authors proved that the functor $H_{sc}^1(\exp \EE nd (\EE)(\mathcal V))$ of Definition \ref{def.h1sc} is isomorphic to the functor of  infinitesimal deformations of $\EE$. 
 Moreover, the cohomology of the total complex of the semicosimplicial dgLa above is $H^k(\Tot(\EE nd (\EE)(\mathcal V))) \cong H^k(X, \EE nd (\EE))$ for all $k \in \ZZ$, according to the classical fact  that the tangent space to the functor of  the infinitesimal deformations of $\EE$ is $H^1(X, \EE nd (\EE))$ and the obstructions are contained in $H^2(X, \EE nd (\EE))$.

\end{example}

\subsection{Semicosimplicial groupoid and descent theorem}

This subsection is dedicated to  semicosimplicial objects in the category of groupoids, their total groupoid and the fundamental Hinich's Theorem on descent of Deligne groupoids. Here, we mainly follow \cite{Manetti.LMDT}. 
\begin{definition}
A \emph{semicosmplicial groupoid} is a
covariant functor $\mathbf{\Delta}_{\operatorname{mon}}\to
\Grpds$, from the category
$\mathbf{\Delta}_{\operatorname{mon}}$, whose objects are finite
ordinal sets and whose morphisms are order-preserving injective
maps between them, to the category of groupoids. Equivalently, a
semicosimplicial groupoid $G^\Delta$ is a diagram
 \[
\xymatrix{ G_0
\ar@<2pt>[r]\ar@<-2pt>[r] & G_1
      \ar@<4pt>[r] \ar[r] \ar@<-4pt>[r] & G_2
\ar@<6pt>[r] \ar@<2pt>[r] \ar@<-2pt>[r] \ar@<-6pt>[r]&
\cdots},
\]
where each ${G}_i$ is a groupoid, and for each
$i>0$, there are $i+1$ morphisms   of  groupoids
\[
\partial_{k,i}\colon {G}_{i-1}\to {G}_{i},
\qquad k=0,\dots,i,
\]
such that
$\partial_{k+1,i+1}\partial_{l,i}=\partial_{l,i+1}\partial_{k,i}$,
for any
$k\geq l$. Since every groupoid is a small category, equality of morphisms of groupoids is intended in the strict sense.

A \emph{morphism of semicosimplicial groupoids} $f:G^\Delta \to H^\Delta$, is given  by a sequence $\{f_i: G_i \to H_i$\}  of morphisms of groupoids, commuting with the maps $\partial_{k,i}$.
\end{definition}

\begin{example}
Let  ${\mathfrak g}^\Delta$ be a semicosimplicial dgLa
 \[
\xymatrix{ {{\mathfrak g}_0}
\ar@<2pt>[r]\ar@<-2pt>[r] & { {\mathfrak g}_1}
      \ar@<4pt>[r] \ar[r] \ar@<-4pt>[r] & { {\mathfrak g}_2}
\ar@<6pt>[r] \ar@<2pt>[r] \ar@<-2pt>[r] \ar@<-6pt>[r]&
\cdots},
\]
such that every $ {\mathfrak g}_i$ is a nilpotent dgLa. Applying the Deligne groupoid of Definition \ref{def Deligne groupoid}, one obtains the semicosimplicial groupoid 
\[
\Del_{\g^{\Delta}}: \xymatrix{  \Del_{\g_0}
\ar@<2pt>[r]\ar@<-2pt>[r] &  \Del_{\g_1}
      \ar@<4pt>[r] \ar[r] \ar@<-4pt>[r] &  \Del_{\g_2}
\ar@<6pt>[r] \ar@<2pt>[r] \ar@<-2pt>[r] \ar@<-6pt>[r]&
\cdots}.
\]
Analogously, for any $A \in \Art_\kk$ and any  semicosimplicial dgLa ${\mathfrak g}^\Delta$, one defines the semicosimplicial groupoid
\[
\Del_{\g^{\Delta}}(A): \xymatrix{  \Del_{\g_0}(A)
\ar@<2pt>[r]\ar@<-2pt>[r] &  \Del_{\g_1}(A)
      \ar@<4pt>[r] \ar[r] \ar@<-4pt>[r] &  \Del_{\g_2}(A)
\ar@<6pt>[r] \ar@<2pt>[r] \ar@<-2pt>[r] \ar@<-6pt>[r]&
\cdots}.
\]

\end{example}

\begin{definition}
Let $G^\Delta$ be a semicosimplicial groupoid. The \emph{total groupoid} $\Tot(G^\Delta)$ of  $G^\Delta$ is the groupoid defined as follows:
\begin{itemize}
\item the objects of $\Tot(G^\Delta)$ are the elements of the form $(l,m)$, where $l$ is an object in $G_0$ and $m: \partial_{0,1} l \to \partial_{1,1} l$ is a morphism in $G_1$ such that the following diagram commutes  in $G_2$

 \[ \xymatrix{ &     \partial_{0,2}\partial_{0,1} l  \ar@{=}[dl]   \ar[dr]^{\partial_{0,1} m}     &  \\
                 \partial_{1,2}\partial_{0,1} l  \ar[d]^{\partial_{1,1} m}  &         &  \partial_{0,2}\partial_{1,1} l \ar@{=}[d]    \\
 \partial_{1,2}\partial_{1,1} l\ar@{=}[dr]  &  &\partial_{2,2}\partial_{0,1} l \ar[dl]^{\partial_{2,1} m}  \\
 &  \partial_{1,2}\partial_{1,1} l,  &   }
   \]

\item the morphisms between two objects $(l_0,m_0)$ and $(l_1,m_1)$ are the morphisms $a$ in $G_0$ such that the following diagram commutes
\[ \xymatrix{ \partial_{0,1} l_0    \ar[r]^{m_0}\ar[d]_{\partial_{0,1}a} & \partial_{1,1}  l_0 \ar[d]^{\partial_{1,1}a} \\  
\partial_{0,1} l_1   \ar[r]_{m_1} &\partial_{1,1}  l_1.  }   \] 
\end{itemize}
\end{definition}

\begin{proposition}\cite[Proposition 7.5.5]{Manetti.LMDT}\label{proposizione equivalenza gruppp semicosim e Tot}
Let $f: G^\Delta \to H^\Delta$ be a morphism of semicosimplicial groupoids, such that for every $n=0,1,2$ the component $f_n: G_n \to H_n$ is an equivalence
of groupoids, then $f: \Tot(G^\Delta) \to \Tot(H^\Delta)$ is also an equivalence of groupoids.
\end{proposition}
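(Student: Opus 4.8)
The plan is to show that the induced functor $F := \Tot(f)\colon \Tot(G^\Delta)\to\Tot(H^\Delta)$, which sends an object $(l,m)$ to $(f_0(l),f_1(m))$ and a morphism $a$ to $f_0(a)$, is fully faithful and essentially surjective; these two properties together characterise an equivalence of groupoids. First I would dispatch full faithfulness, which uses only the hypotheses on $f_0$ and $f_1$. A morphism $(l_0,m_0)\to(l_1,m_1)$ in $\Tot(G^\Delta)$ is by definition a morphism $a\colon l_0\to l_1$ in $G_0$ satisfying the compatibility square $\partial_{1,1}(a)\circ m_0=m_1\circ\partial_{0,1}(a)$, and $F$ acts on it through $f_0$; since $f_0$ is faithful, $F$ is faithful. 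For fullness, given $b\colon (f_0l_0,f_1m_0)\to(f_0l_1,f_1m_1)$ in $\Tot(H^\Delta)$, fullness of $f_0$ produces $a\colon l_0\to l_1$ with $f_0(a)=b$; applying $f_1$ to the two sides of the compatibility square demanded of $a$, and using that $f_1$ commutes with the coface maps $\partial_{k,1}$, the compatibility square satisfied by $b$ in $H_1$ becomes the equality $f_1\bigl(\partial_{1,1}(a)\circ m_0\bigr)=f_1\bigl(m_1\circ\partial_{0,1}(a)\bigr)$. Faithfulness of $f_1$ then forces the square for $a$ to commute in $G_1$, so $a$ is a genuine morphism of $\Tot(G^\Delta)$ lifting $b$, and $F$ is full.

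The substantive part is essential surjectivity. I fix an object $(\lambda,\mu)\in\Tot(H^\Delta)$, with $\lambda$ an object of $H_0$ and $\mu\colon\partial_{0,1}\lambda\to\partial_{1,1}\lambda$ in $H_1$. Since $f_0$ is essentially surjective, I choose an object $l\in G_0$ and an isomorphism $c\colon f_0(l)\to\lambda$ in $H_0$. Because $f_1$ commutes with the coface maps, the arrow $\partial_{1,1}(c)^{-1}\circ\mu\circ\partial_{0,1}(c)$ is a morphism $f_1(\partial_{0,1}l)\to f_1(\partial_{1,1}l)$ in $H_1$, so fullness of $f_1$ yields $m\colon\partial_{0,1}l\to\partial_{1,1}l$ in $G_1$ with $f_1(m)=\partial_{1,1}(c)^{-1}\circ\mu\circ\partial_{0,1}(c)$. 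By this very choice, $c$ makes the defining square of a morphism $(f_0l,f_1m)\to(\lambda,\mu)$ commute in $H_1$, and $c$ is invertible; hence, once $(l,m)$ is known to be an object of $\Tot(G^\Delta)$, the arrow $c$ exhibits an isomorphism $(f_0l,f_1m)\cong(\lambda,\mu)$, and $F$ is essentially surjective.

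The remaining point, which I expect to be the main obstacle, is to verify that $(l,m)$ satisfies the cocycle condition in $G_2$ encoded by the hexagonal diagram defining objects of the total groupoid, namely $\partial_{1,2}(m)=\partial_{2,2}(m)\circ\partial_{0,2}(m)$ after the identifications coming from the cosimplicial identities. Here I would apply $f_2$ to both composites of $m$. Since $f_2$ commutes with the three coface maps $\partial_{k,2}$ and is a functor, and since $f_1(m)$ differs from $\mu$ only by the conjugation by $\partial_{k,1}(c)$, expanding $f_2$ of each composite produces the corresponding composite of $\mu$'s, pre- and post-composed with coface images of $c$. The cosimplicial identities $\partial_{k+1,2}\partial_{l,1}=\partial_{l,2}\partial_{k,1}$ for $k\ge l$ make these conjugating factors telescope and cancel, so that the two composites of $f_1(m)$ agree precisely because $(\lambda,\mu)$ already satisfies its cocycle condition in $H_2$. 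Thus the two composites of $m$ have equal images under $f_2$, and faithfulness of $f_2$ yields the cocycle identity for $m$ in $G_2$. This careful tracking of which coface map acts where, and of the cancellation of the $c$-factors, is the delicate computation, and it is exactly the place where the hypothesis at level $n=2$ enters — in fact only faithfulness of $f_2$ is needed, just as full faithfulness of $f_0$, $f_1$ and essential surjectivity of $f_0$ were all that the other steps required. With $(l,m)$ now a legitimate object, the preceding paragraph completes essential surjectivity, and the proof is finished.
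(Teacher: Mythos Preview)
Your argument is correct and is the standard proof of this fact. Note, however, that the paper does not supply its own proof of this proposition: it is simply quoted from \cite[Proposition 7.5.5]{Manetti.LMDT}, so there is no in-paper argument to compare against. Your use of the hypotheses is sharp---faithfulness of $f_0$ for faithfulness of $\Tot(f)$, fullness of $f_0$ and faithfulness of $f_1$ for fullness, and essential surjectivity of $f_0$, fullness of $f_1$, faithfulness of $f_2$ for essential surjectivity---and the telescoping of the $c$-conjugations via the cosimplicial identities $\partial_{2,2}\partial_{0,1}=\partial_{0,2}\partial_{1,1}$, $\partial_{2,2}\partial_{1,1}=\partial_{1,2}\partial_{1,1}$, $\partial_{1,2}\partial_{0,1}=\partial_{0,2}\partial_{0,1}$ is exactly what is needed.
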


We end up this section with the following theorem due to Hinich on descent of Deligne groupoids.
\begin{theorem} \cite[Corollary 4.1.1]{hinich} \label{hinich theorem}
Let $\g^{\Delta}$ be a nilpotent semicosimplicial dgLa, such that every $\g_n$ is trivial in negative degrees.  Then, there exists a natural equivalence of groupoids
\[\Del_{\Tot(\g^{\Delta})} \to \Tot(\Del_{\g^{\Delta}}).
\]
\end{theorem}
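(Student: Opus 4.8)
The plan is to produce an explicit natural functor $\Phi\colon \Del_{\Tot(\g^\Delta)} \to \Tot(\Del_{\g^\Delta})$ and to show it is fully faithful and essentially surjective for every $A\in\Art_\kk$, writing $L:=\Tot(\g^\Delta)\otimes\m_A$. A useful preliminary simplification is that both sides have only gauge transformations as morphisms: since each $\g_n$ is trivial in negative degrees and $(\Omega_n\otimes\g_n)^d=\bigoplus_{p+q=d}\Omega_n^p\otimes\g_n^q$ with $p\geq 0$ and $q\geq 0$, the Thom-Whitney dgLa $\Tot(\g^\Delta)$ is itself concentrated in non-negative degrees. Hence, by the Remark following Definition \ref{def Deligne groupoid}, all irrelevant stabilizers vanish and I may argue purely with Maurer-Cartan elements and the gauge action, with no quotient by $I(x)$ to track.

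To build $\Phi$ on objects I start from $\xi=(x_n)_n\in\MC(L)$. Because $\Omega_0=\kk$, the component $x_0$ lies in $\g_0^1\otimes\m_A$ and the Maurer-Cartan equation forces $x_0\in\MC(\g_0\otimes\m_A)$, so I set $l:=x_0$. The next component $x_1\in\Omega_1\otimes\g_1\otimes\m_A=(\g_1\otimes\m_A)[t,dt]$ is a Maurer-Cartan element whose restrictions to the two faces of $\Delta^1$ equal $\partial_{0,1}l$ and $\partial_{1,1}l$, by the compatibility $\delta^{k,1}x_1=\partial_{k,1}x_0$. A Maurer-Cartan element over $[t,dt]$ with prescribed endpoints is exactly a homotopy between them, and homotopy of Maurer-Cartan elements coincides with gauge equivalence (see \cite{Manetti.LMDT}); integrating the corresponding gauge flow yields a morphism $m\colon\partial_{0,1}l\to\partial_{1,1}l$ in $\Del_{\g_1}$. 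The component $x_2$, together with the relations $\delta^{k,2}x_2=\partial_{k,2}x_1$, encodes precisely the commuting triangle in $\Del_{\g_2}$ demanded of an object of $\Tot(\Del_{\g^\Delta})$, so $(l,m)$ is well defined. On morphisms, a gauge transformation $e^A$ with $A=(a_n)_n\in L^0$ has $a_n\in\Omega_n^0\otimes\g_n^0\otimes\m_A$, and I send it to $a:=a_0$; compatibility of $A$ and naturality of the flow make $a$ fill the square defining a morphism of the total groupoid, and naturality of $\Phi$ in $A$ is evident.

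Faithfulness follows because a degree-zero element of $L$ is recovered from its lowest component together with the (now trivial) higher stabilizer data, so distinct gauge transformations cannot induce the same $a_0$ while acting identically on the descent data; fullness then amounts to lifting a morphism $a$ to a compatible family $(a_n)$, which is the same filling problem as for objects. The technical heart is \textbf{essential surjectivity}: given descent data $(l,m)$ obeying the cocycle condition, I must reconstruct a genuine Thom-Whitney Maurer-Cartan element $(x_n)_n$, that is, compatible fillings $x_n\in\Omega_n\otimes\g_n\otimes\m_A$ for all $n\geq 2$. I would build these by induction on $n$: the prescribed faces $\partial_{k,n}x_{n-1}$ assemble into a Maurer-Cartan element over the boundary $\partial\Delta^n$, and I extend it across the interior. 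The two indispensable inputs are the acyclicity of the polynomial de Rham algebra $\Omega_\bullet$ of the simplices, in the relative form given by Dupont's explicit contraction of $\Omega_n$ onto its faces, and the nilpotency of $\m_A$, which makes all exponential, Baker-Campbell-Hausdorff and gauge-integration series terminate; together these render the extension unobstructed, with the level-$2$ cocycle condition being exactly what starts the induction.

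It is worth noting the consistency with Proposition \ref{proposizione equivalenza gruppp semicosim e Tot}: that result reduces equivalences of total groupoids to the first three semicosimplicial levels, which is exactly why the object $(l,m)$ and its cocycle condition involve only $x_0$, $x_1$, $x_2$, the higher components $x_n$ for $n\geq 3$ being pure fillings. This is the structural reason the crux of the proof is confined to low simplicial degree, even though it is the nilpotency-driven convergence and the acyclicity of the forms on the simplex that actually produce the fillings; I expect this higher filling step to be the main obstacle, the $n\leq 2$ data being essentially formal.
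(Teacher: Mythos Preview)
The paper does not prove this theorem at all: it is stated with a citation to \cite[Corollary 4.1.1]{hinich} and used as a black box throughout. There is therefore no ``paper's own proof'' to compare your attempt against.

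Your sketch does outline the standard architecture of Hinich's argument---build the explicit comparison functor $\Phi$, then fill higher simplices using the acyclicity of polynomial de Rham forms (Dupont contraction) together with nilpotency---and you correctly observe that $\Tot(\g^\Delta)$ is concentrated in non-negative degrees, so irrelevant stabilizers vanish on both sides. That reduction is genuine and important.

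Where your write-up is weakest is the full faithfulness step. The sentence ``a degree-zero element of $L$ is recovered from its lowest component'' is literally false: for fixed $a_0\in\g_0^0\otimes\m_A$ there are many compatible families $(a_n)_n\in L^0$, since each $a_n$ lives in $\Omega_n^0\otimes\g_n^0\otimes\m_A$ subject only to face conditions. What is actually true, and what you need, is that \emph{given} source and target Maurer--Cartan elements $\xi,\xi'$ and a prescribed $a_0$, there is at most one $A\in L^0$ with $A_0=a_0$ and $e^A*\xi=\xi'$. This uniqueness is not formal: one proves it by the same mechanism you invoke for essential surjectivity, namely the gauge-flow ODE $a_n'(t)=[a_n,x_n]-(\text{boundary data})$ on each simplex, whose solution is unique by induction on the nilpotency filtration of $\m_A$. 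In other words, full faithfulness is \emph{another} filling problem of exactly the same flavour as the one you earmark as ``the main obstacle'', and deserves the same level of argument rather than a one-line dismissal. Once you treat it on equal footing with essential surjectivity, the sketch becomes a fair summary of Hinich's proof.
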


\begin{remark}
We remark that  a generalisation of  the previous Hinich's descent Theorem for the Deligne-Getzler $\infty$-groupoid was proved in \cite{bandiera}.
\end{remark}

\section{Deformations of a morphism of locally free sheaves} \label{sezione deformazione mappe}
In this section, we  would like to analyse the infinitesimal deformations of a morphism of locally free sheaves and explicitly describe a differential graded Lie algebra that controls the deformation problem.

\subsection{Geometric deformations}
Let $X$ be a smooth projective variety over $\mathbb{K}$,   $\FF$ and $\GG$  locally free sheaves of $\OO_X$-modules over $X$ and   $\alpha \colon \FF\to \GG$   a morphism of them. 
First of all we recall some classical definitions.  

\begin{definition} \emph{An infinitesimal deformation of $\FF$ over $A\in \Art_{\kk}$} is a locally free sheaf  $\FF_A$ of $\OO_X \otimes A$-module  on $X\times \Spec A $ together with a morphism $\pi_A: \FF_A \to \FF$, such that the obvious restriction of scalars $\pi_A: \FF_A \otimes_A \kk \to \FF$ is an isomorphism. 

Let $( \FF_A, \pi_A)$ and  $( {\FF'}_A, {\pi'}_A)$ be two deformations of the sheaf $\FF$ over $A$. They are \emph{isomorphic}, if there exists an isomorphism $\phi \colon  \FF_A \to {\FF'}_A$ of $\OO_X \otimes A$-modules, that commutes with the restrictions to $\FF$. 
\end{definition}
\begin{definition} \label{def.deform di morf di fasci}
 \emph{An infinitesimal deformation of the  morphism  $\alpha \colon \FF\to \GG$ over  $A\in \Art_{\mathbb{K}}$} is a morphism $\alpha_A: \FF_A\to \GG_A$ of locally free sheaves of $\OO_X \otimes A$-modules over $X\times \Spec A$, where $\FF_A$ and $\GG_A$ are deformations of $\FF$ and $\GG$ over $A$ respectively, such that the following diagram is commutative: 
\[
\xymatrix{ \FF_A \ar[r]^{\alpha_A} \ar[d]^{\pi_A^\FF}  & \GG_A \ar[r] \ar[d]^{\pi_A^\GG} & \Spec A  \ar[d] \\
\FF \ar[r]^{\alpha} & \GG \ar[r] & \Spec \kk .  }
\]

Two deformations $\alpha_A: \FF_A\to \GG_A$ and ${\alpha'}_A: {\FF'}_A\to {\GG'}_A$  of $\alpha: \FF \to \GG$ over $A$ are \emph{isomorphic}, 
if there exist a pair $(\phi,\psi)$ of isomorphisms of sheaves $\phi: \FF_A \to {\FF'}_A$ and $\psi : \GG_A \to {\GG'}_A$, such that the following diagram commutes:
\[
\xymatrix{ \FF_A \ar[rrr]^{{\alpha}_A} \ar[dd]^\phi \ar[dr]^{\pi_A^{\FF}}  & & & \GG_A \ar[dl]_{\pi_A^{\GG}}   \ar[dd]^\psi \ar[dr] & \\
 & \FF \ar[r]^\alpha & \GG  & & \Spec A \\
{\FF'}_A \ar[rrr]_{{\alpha'}_A } \ar[ur]_{\pi_A^{\FF'}}   & & &     {\GG'}_A \ar[ur] \ar[ul]^{\pi_A^{\GG'}} . &  }\]

\end{definition}

We recall that, the trivial deformation of a sheaf $\FF$ over $A$ is given by  $\FF\otimes_{\OO_X}\OO_ {X \times \Spec A}=\FF\otimes_\kk A $ and that the trivial deformation of $\alpha \colon \FF\to \GG$   over  $A\in \Art_\kk$ is given by the trivial extension 
$ \alpha\otimes \Id_A \colon \FF\otimes_\kk A  \to \GG\otimes_\kk A $.
 
Note that, since $\FF$ and $\GG$ are  locally free sheaves, any  infinitesimal deformation  of $\alpha \colon \FF\to \GG$  is locally trivial, i.e.,  it is locally in $X$  isomorphic to the trivial  deformation.

\begin{definition} \label{def.funtore def in gruppoidi di alpha}
The \emph{functor of infinitesimal deformations of the morphism $\alpha:\FF \to \GG$} is the  functor
\[ \Def_{(\FF,\alpha, \GG)} : \Art_\kk \to \Grpds,\]
that associates, to any $A \in \Art_\kk$, the groupoid $\Def_{(\FF,\alpha, \GG)}(A)$, whose objects are the deformations of the morphism $\alpha$  over
 $A$ and whose morphisms are the isomorphisms of them.
  Note that  
\[ \pi_0(\Def_{(\FF,\alpha, \GG))(A)}) = \{\mbox{isomorphism classes of deformations of the morphism $\alpha$ over $A$}\} \]
is the classical functor of deformations in $\Set$.

\end{definition} 

\begin{remark}
Among all the  infinitesimal deformations of $\alpha \colon \FF\to \GG$ over $A \in\Art_\kk$, there are the infinitesimal deformations of the morphism  $\alpha$ in which $\FF$ and $\GG$ deform trivially, i.e., the deformations $ \alpha_A \colon \FF\otimes_\kk A  \to \GG\otimes_\kk A $ such that just the map deforms. The  groupoid of these deformations defines a subfunctor 
$\Def_{\alpha} : \Art_\kk \to \Grpds$ of the functor $\Def_{(\FF,\alpha, \GG)} $.
\end{remark}

Let  $\alpha \colon \FF\to \GG$  be a morphism of locally free sheaves and $\gamma$ its graph in
$ \FF\oplus \GG$. We define $\gamma$ as the image of the morphism of sheaves $(\Id, \alpha): \FF \to \FF\oplus \GG$. 

For future use, we would like to observe  that a deformation of $\alpha: \FF \to \GG$ over $A \in \Art_\kk$    as  in Definition \ref{def.deform di morf di fasci} can be also seen as the collection of the following data:

\begin{itemize}
\item two deformations $\FF_A$ and $\GG_A$ of $\FF$ and $\GG$ over $A$, respectively; 
\item a deformation $\gamma_A \subseteq (\FF\oplus\GG)_A$ of the   inclusion  $\gamma \subseteq \FF\oplus\GG$ over $A$; 
\item an isomorphism $f_A$ between the deformations  $\FF_A \oplus \GG_A$ and $(\FF\oplus\GG)_A$. 
\end{itemize}
Two of such collections of data $\left(\FF_A, \GG_A, \gamma_A \subseteq (\FF\oplus\GG)_A, f_A \right)$ and $\left(\FF'_A, \GG'_A, \gamma'_A \subseteq (\FF\oplus\GG)'_A, f'_A \right)$ define isomorphic deformations of $\alpha:\FF \to \GG$, if there exist:
\begin{itemize}
\item two isomorphisms $\phi: \FF_A\to \FF'_A$ and $\psi:\GG_A\to \GG'_A$ of the deformations of $\FF$ and $\GG$ over $A$, respectively, 
\item  
an isomorphism $\chi:(\FF\oplus\GG)_A\to (\FF\oplus\GG)'_A$ of the deformations of $\FF\oplus \GG$ over $A$, with $\chi(\gamma_A)  \subseteq \gamma'_A$,
\end{itemize}
such that the following diagram is commutative
\[ \xymatrix{ \FF_A \oplus \GG_A \ar[r]^{f_A} \ar[d]^{(\phi,\psi)}  & (\FF\oplus\GG)_A \ar[d]^\chi \\
\FF'_A \oplus \GG'_A \ar[r]^{f'_A}  & (\FF\oplus\GG)'_A .
  }  \]

We point out that this approach is similar to the one used in \cite{HorikawaI, HorikawaII, HorikawaIII, Sernesi} to analyse deformations of holomorphic maps of complex manifolds. In particular, this was applied in \cite{Dona.Tesi, Dona.def maps} and in \cite{Manetti.LMDT} to investigate these deformations via dgLas.

\bigskip

\subsection{ The local case} \label{subsection.the local case}
First of all, we analyse the infinitesimal deformations of a morphism $\alpha: \FF \to \GG$  in the local case.
We assume that $X$ is a smooth affine variety over $\kk$ and  $\alpha: \FF \to \GG$ is a morphism of the  free sheaves $\FF$ and $\GG$ on $X$. 
Under this hypothesis, we are able to find out a dgLa that controls the deformations of $\alpha$.
 
We denote  by  $\EE nd (\FF)$, $\EE nd (\GG)$  and  $\EE nd (\FF \oplus \GG)$ the free sheaves  of the $\OO_X$-modules  endomorphisms of the sheaves $\FF$, $\GG$ and $\FF \oplus \GG$, respectively. Moreover, let $\LL$ be the subsheaf of $\EE nd (\FF \oplus \GG)$  that preserves the graph  $\gamma \subseteq \FF \oplus \GG$ of the morphism $\alpha$, i.e., 
\[ \LL := \{ \varphi \in \EE nd (\FF \oplus \GG)  \mid \varphi(\gamma) \subseteq \gamma \}.
\]
 
 \begin{definition} \label{def.H(F,alpha,G)}
In the above notation, we define the dgLa $H(X, \FF, \alpha, \GG)$
as the Thom-Whitney homotopy fibre product of the diagram of Lie algebras
\[ \xymatrix{  & \Gamma(X, \LL) \ar[d]^h \\
\Gamma(X, \EE nd(\FF) \oplus \EE nd (\GG) ) \ar[r]^-g  &   \Gamma(X, \EE nd (\FF \oplus \GG)),} \]
where $h$ is the inclusion and $g$ is defined as the map 
$(\varphi, \psi) \mapsto\left( \begin{array}{cc} \varphi & 0 \\ 0 & \psi\end{array}\right).
$
More explicitly, according to Definition \ref{defi thom with coppia morfismi}, the elements of  $H(X, \FF, \alpha, \GG)$ are of the form $(x, y, z(t,dt))$, where
\[ x \in \Gamma(X, \LL),  \quad y \in  \Gamma(X, \EE nd(\FF) \oplus \EE nd (\GG)), \]\[z(t, dt)  \in  \Gamma(X, \EE nd (\FF \oplus \GG))[t, dt], \]
such that $ z(0)= h(x), z(1) = g(y)$.
 
\end{definition}

Then, we can prove the following result.
 
\begin{theorem} \label{thm.equvalenza di funtori caso Stein}
In this setting, the functor $\Def_{(\FF, \alpha, \GG)}$ of   infinitesimal deformations of the morphism $\alpha: \FF \to \GG$ on $X$ is equivalent to the Deligne functor
$\Del_{H(X,\FF, \alpha, \GG)}$ associated to the dgLa $H(X, \FF, \alpha, \GG)$.
\end{theorem}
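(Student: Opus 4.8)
The plan is to identify $H(X,\FF,\alpha,\GG)$ with the Thom--Whitney dgLa of an explicit two-term semicosimplicial dgLa, to apply Hinich's descent theorem in order to pass from the Deligne functor of a total dgLa to the total groupoid of the associated semicosimplicial Deligne groupoid, and finally to match that total groupoid with the reformulation of $\Def_{(\FF,\alpha,\GG)}$ as a collection of data recorded just before the statement. Throughout, write $L:=\Gamma(X,\LL)$, $N:=\Gamma(X,\EE nd(\FF)\oplus\EE nd(\GG))$ and $M:=\Gamma(X,\EE nd(\FF\oplus\GG))$, all concentrated in degree zero, and let $h\colon L\to M$, $g\colon N\to M$ be the two morphisms of Definition \ref{def.H(F,alpha,G)}.

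First I would observe, using the Remark following Definition \ref{defi thom with coppia morfismi}, that $H(X,\FF,\alpha,\GG)$ is the Thom--Whitney dgLa $\Tot(\g^{\Delta})$ of the semicosimplicial dgLa $\g^{\Delta}$ with $\g_0=L\oplus N$, $\g_1=M$ and $\g_n=0$ for $n\ge 2$, whose two face maps $\g_0\to\g_1$ are $(x,y)\mapsto h(x)$ and $(x,y)\mapsto g(y)$. After tensoring with $\m_A$ this is a nilpotent semicosimplicial dgLa, trivial in negative degrees, so Hinich's Theorem \ref{hinich theorem} yields a natural equivalence of groupoids $\Del_{H(X,\FF,\alpha,\GG)}(A)\to\Tot(\Del_{\g^{\Delta}})(A)$ for every $A\in\Art_\kk$. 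It therefore suffices to produce a natural equivalence $\Tot(\Del_{\g^{\Delta}})\cong\Def_{(\FF,\alpha,\GG)}$. Next I would unwind the total groupoid: since $\g_2=0$ the groupoid $\Del_{\g_2}(A)$ is trivial, so the commuting triangle in $G_2$ in the definition of the total groupoid is automatically satisfied, and $\Tot(\Del_{\g^{\Delta}})(A)$ is exactly the homotopy fibre product of $\Del_L(A)$ and $\Del_N(A)$ over $\Del_M(A)$. Its objects are pairs $(l,m)$ with $l$ an object of $\Del_{L\oplus N}(A)=\Del_L(A)\times\Del_N(A)$ and $m\colon h(l)\to g(l)$ an arrow of $\Del_M(A)$, and its morphisms $(l_0,m_0)\to(l_1,m_1)$ are arrows $a$ of $\Del_{L\oplus N}(A)$ with $m_1\circ\partial_{0,1}(a)=\partial_{1,1}(a)\circ m_0$.

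Finally I would install the geometric dictionary. On the affine variety $X$ with $\FF,\GG$ free, a locally free sheaf has only the trivial infinitesimal deformation, so each of $\Del_N(A)$, $\Del_L(A)$ and $\Del_M(A)$ has a single isomorphism class, with automorphisms the exponentials of the corresponding global endomorphisms (graph-preserving, in the case of $L$) tensored with $\m_A$. Concretely, $\Del_N(A)$ is equivalent to the groupoid of pairs of deformations $(\FF_A,\GG_A)$, $\Del_L(A)$ to the groupoid of deformations $\gamma_A\subseteq(\FF\oplus\GG)_A$ of the graph, and $\Del_M(A)$ to the groupoid of deformations of $\FF\oplus\GG$; moreover $g$ induces the direct-sum functor $(\FF_A,\GG_A)\mapsto\FF_A\oplus\GG_A$ and $h$ the forgetful functor $\gamma_A\subseteq(\FF\oplus\GG)_A\mapsto(\FF\oplus\GG)_A$. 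Under these identifications an object $(l,m)$ of the total groupoid becomes a quadruple $\bigl(\FF_A,\GG_A,\ \gamma_A\subseteq(\FF\oplus\GG)_A,\ f_A\bigr)$ with $f_A=m$ the gluing isomorphism $\FF_A\oplus\GG_A\xrightarrow{\sim}(\FF\oplus\GG)_A$, and a morphism $a$ becomes an isomorphism of such quadruples; this is precisely the collection-of-data description of $\Def_{(\FF,\alpha,\GG)}(A)$. These three levelwise equivalences assemble into an equivalence of the associated semicosimplicial groupoids, and Proposition \ref{proposizione equivalenza gruppp semicosim e Tot} then upgrades it to an equivalence of the total groupoids, naturally in $A$.

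The main obstacle I expect is exactly the geometric dictionary of the last step: one must prove that $L$, $N$ and $M$ genuinely control the local deformation problems of, respectively, the graph inside the sum, the pair of sheaves, and the ambient sheaf, and that the algebraic face maps $g,h$ translate into the geometric direct-sum and forgetful functors. This rests on the triviality of deformations of free sheaves over an affine base and on the identification of automorphisms of the trivial deformation with $\exp$ of global endomorphisms; the one genuinely delicate point is to verify that an automorphism $m\in\exp(M\otimes\m_A)$ carries the trivial graph to the graph of an actually deformed morphism $\alpha_A$, compatibly with the chosen trivialisations, so that the correspondence is well defined on objects and on isomorphisms.
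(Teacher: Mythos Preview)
Your proposal is correct and follows essentially the same route as the paper: identify $H(X,\FF,\alpha,\GG)$ as the Thom--Whitney dgLa of the two-term semicosimplicial Lie algebra, apply Hinich's descent theorem, and then match $\Tot(\Del_{\g^{\Delta}})(A)$ with the geometric deformation data using that on an affine $X$ all the sheaves involved have only trivial deformations. The only difference is cosmetic: the paper unwinds the objects and morphisms of $\Tot(\Del_{\g^{\Delta}})(A)$ explicitly and matches them by hand, whereas you phrase the same matching as levelwise equivalences of groupoids and then invoke Proposition~\ref{proposizione equivalenza gruppp semicosim e Tot}.
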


\begin{proof}
By Definition \ref{def.H(F,alpha,G)}, $H(X,\FF, \alpha, \GG)$ is the Thom-Whitney dgLa of the following semicosimplicial Lie algebra: 
\[ \h^\Delta:
\xymatrix{ {\Gamma(X, \LL) \oplus \Gamma(X, \EE nd(\FF) \oplus \EE nd (\GG)) }
\ar@<2pt>[r]^{\ \ \ \ \ \ \ \ \ \ h}\ar@<-2pt>[r]_{\ \ \ \ \ \ \ \ \ \ g} & {\Gamma(X, \EE nd (\FF \oplus \GG))}
      \ar@<4pt>[r] \ar[r] \ar@<-4pt>[r] & 0.}
\]
By Hinich's Theorem on descent of Deligne groupoids (Theorem \ref{hinich theorem}), there is an equivalence of functors of groupoids 
\[\Del_{H(X,\FF, \alpha, \GG)}  \cong \Tot(\Del_{\h^\Delta}),   \]
i.e., for every $A \in \Art_\kk$, there is an equivalence of the groupoids  
$\Del_{H(X,\FF, \alpha, \GG)} (A)  \cong \Tot(\Del_{\h^\Delta})(A).$ 

Let us describe explicitly the objects and the morphisms of the groupoid $\Tot(\Del_{\h^\Delta})(A)$, for any $A \in \Art_\kk$.
Its objects are elements of the form $(x,y,z)$, where:
\[ x \in \Del(\Gamma(X, \LL) \otimes \m_A), \quad y \in \Del(\Gamma(\EE nd (\FF) \oplus \EE nd (\GG)) \otimes \m_A), \]
and so $x=y=0$, since we are dealing with Lie algebras, and
\[  z = e^w \in \exp (\Gamma(X, \EE nd (\FF \oplus \GG)) \otimes \m_A), \]
such that $e^w * 0 = 0 \in \Gamma(X, \LL)\otimes \m_A$. 
A morphism between two of such objects, $e^w$ and $e^t$, is 
of the form $e^a$, where $a=(a_1,a_2,a_3) \in (\Gamma(X,\EE nd (\FF)  \oplus \EE nd (\GG)) \oplus \Gamma(X,\LL)) \otimes \m_A$, such that $e^w e^{h(a)} = e^t e^{g(a)}.$

These data corresponds to the data of the objects and morphisms of the groupoid $\Def_{(\FF,\alpha,\GG)}(A)$. Indeed, when $X$ is an affine smooth variety, any deformation  of a free sheaf on $X$ is trivial and the only datum of a deformation of $\alpha$ is an isomorphism of the direct sum of the trivial deformations of $\FF$ and $\GG$ to the trivial deformation of $\FF \oplus \GG$, that preserves $\gamma$, this is $e^w$.

 A morphism between two of such deformations, $e^w$ and $e^t$, is given by an isomorphism of the trivial deformation of $\FF$, an isomorphism of the trivial deformation of $\GG$, an isomorphism of the trivial deformation of $\FF \oplus \GG$ that preserve $\gamma$. These data are $(a_1,a_2,a_3)$, respectively, and they have to respect compatibilities with the isomorphisms above, that are expressed by the equation $e^w e^{h(a)} = e^t e^{g(a)}$.
\end{proof}

\subsection{The global case} \label{subsection.the global case}
 
Let, now, $X$ be a smooth projective variety over $\kk$,   $\alpha: \FF \to \GG$    a morphism of locally free sheaves on $X$ and $\VV = \{ V_i \}_i$ an open affine cover of $X$ trivialising both $\FF$ and $\GG$.

Consider the following semicosimplicial dgLa
\[ H(\VV)^\Delta:
\xymatrix{ {\prod_i H(V_i,\FF, \alpha, \GG)}
\ar@<2pt>[r]\ar@<-2pt>[r] & {
\prod_{i<j}H(V_{ij},\FF, \alpha, \GG)}
      \ar@<4pt>[r] \ar[r] \ar@<-4pt>[r] &
      {\prod_{i<j<k}H(V_{ijk},\FF, \alpha, \GG)}
\ar@<6pt>[r] \ar@<2pt>[r] \ar@<-2pt>[r] \ar@<-6pt>[r]& \cdots},
\]
where $H(V_i,\FF, \alpha, \GG)$ is the Thom-Whitney homotopy fibre  of  Definition \ref{def.H(F,alpha,G)}. 

Then, we can prove the following result.

\begin{theorem} \label{thm.Del eq Def}
 In this setting, the functor $\Def_{(\FF, \alpha, \GG)}$ of  infinitesimal deformations of the morphism $\alpha: \FF \to \GG$ on $X$ is equivalent to the Deligne functor
$\Del_{\Tot(H(\VV)^\Delta)}$ associated to the Thom-Whitney dgLa of the semicosimplicial dgLa $H(\VV)^\Delta$.

\end{theorem}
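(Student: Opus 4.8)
The plan is to realise $\Def_{(\FF,\alpha,\GG)}$ as the total groupoid of a semicosimplicial groupoid of \emph{local} deformation functors, and then to transport this identification to the dgLa side by means of the local Theorem~\ref{thm.equvalenza di funtori caso Stein} and Hinich's descent Theorem~\ref{hinich theorem}. Since $X$ is separated, every finite intersection $V_{i_0\cdots i_n}$ is again affine and trivialises both $\FF$ and $\GG$, so on each such open the restricted morphism is a morphism of free sheaves. I therefore introduce the semicosimplicial groupoid
\[ \Def^\Delta_{(\FF,\alpha,\GG)}\colon \xymatrix{ \prod_i \Def_{(\FF,\alpha,\GG)}^{V_i} \ar@<2pt>[r]\ar@<-2pt>[r] & \prod_{i<j}\Def_{(\FF,\alpha,\GG)}^{V_{ij}} \ar@<4pt>[r]\ar[r]\ar@<-4pt>[r] & \cdots}, \]
where the superscript denotes the deformation functor of the morphism restricted to the indicated open and the coface maps are the restriction maps.

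First I would prove the descent statement $\Def_{(\FF,\alpha,\GG)} \cong \Tot\big(\Def^\Delta_{(\FF,\alpha,\GG)}\big)$. Unwinding the definition of the total groupoid, an object on the right is a family of local deformations $\alpha_i$ on the $V_i$ together with isomorphisms $m_{ij}$ between the two restrictions to $V_{ij}$ satisfying the cocycle condition on the triple overlaps $V_{ijk}$, while a morphism is a compatible family of local isomorphisms. This is precisely the data needed to reconstruct a global deformation of $\alpha$: the local sheaves and maps glue to global $\FF_A$, $\GG_A$, $\alpha_A$ by the gluing of quasi-coherent sheaves and of morphisms between them along the Zariski cover $\VV$, using the local triviality of deformations recorded after Definition~\ref{def.deform di morf di fasci}. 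Conceptually this is the assertion that deformations of a morphism of locally free sheaves form a stack, and one must match objects and morphisms with Definition~\ref{def.funtore def in gruppoidi di alpha} exactly.

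Next I would apply Theorem~\ref{thm.equvalenza di funtori caso Stein} on each affine open $V_{i_0\cdots i_n}$, obtaining for every $A\in\Art_\kk$ equivalences of groupoids $\Def^{V_{i_0\cdots i_n}}_{(\FF,\alpha,\GG)}(A)\cong \Del_{H(V_{i_0\cdots i_n},\FF,\alpha,\GG)}(A)$. Since the dgLas $H(V_{i_0\cdots i_n},\FF,\alpha,\GG)$ are built functorially from sections over the opens, these equivalences commute with the restriction (coface) maps and assemble into a morphism of semicosimplicial groupoids $\Def^\Delta_{(\FF,\alpha,\GG)}\to \Del_{H(\VV)^\Delta}$ that is a levelwise equivalence. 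By Proposition~\ref{proposizione equivalenza gruppp semicosim e Tot}, applied in degrees $n=0,1,2$, it induces an equivalence $\Tot(\Def^\Delta_{(\FF,\alpha,\GG)})\cong \Tot(\Del_{H(\VV)^\Delta})$ on total groupoids.

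Finally I would invoke Hinich's descent Theorem~\ref{hinich theorem}. Each local dgLa $H(V_{i_0\cdots i_n},\FF,\alpha,\GG)$ is a Thom-Whitney homotopy fibre product of ordinary Lie algebras, hence concentrated in non-negative degrees, so after tensoring with $\m_A$ the semicosimplicial dgLa $H(\VV)^\Delta$ meets the hypotheses of the theorem, giving a natural equivalence $\Del_{\Tot(H(\VV)^\Delta)}\cong \Tot(\Del_{H(\VV)^\Delta})$. Chaining the three equivalences yields $\Def_{(\FF,\alpha,\GG)}\cong \Del_{\Tot(H(\VV)^\Delta)}$. I expect the main obstacle to be the descent step of the second paragraph: one must verify that gluing the local deformations along the $m_{ij}$ subject to the cocycle condition on $V_{ijk}$ reproduces precisely the global objects and morphisms, and that the local equivalences of Theorem~\ref{thm.equvalenza di funtori caso Stein} are natural enough to glue into a morphism of semicosimplicial groupoids; once these compatibilities are secured, the remaining two steps are formal applications of Proposition~\ref{proposizione equivalenza gruppp semicosim e Tot} and Theorem~\ref{hinich theorem}.
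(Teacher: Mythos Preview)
Your proposal is correct and follows essentially the same strategy as the paper: both proofs chain together (i) the descent/gluing identification $\Def_{(\FF,\alpha,\GG)}\cong \Tot(\Def^\Delta_{(\FF,\alpha,\GG)})$, (ii) the levelwise local equivalence from Theorem~\ref{thm.equvalenza di funtori caso Stein} combined with Proposition~\ref{proposizione equivalenza gruppp semicosim e Tot}, and (iii) Hinich's Theorem~\ref{hinich theorem}, merely in a different order. Your version is somewhat more explicit than the paper's (you spell out why the intersections are affine, why the local dgLas are concentrated in non-negative degrees, and you flag the naturality of the local equivalences with respect to restriction), but the argument is the same.
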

 
\begin{proof}
Applying Hinich's Theorem  on descent of Deligne groupoids (Theorem \ref{hinich theorem}), there exists an equivalence of groupoids:
\[ \Del_{\Tot(H(\VV)^\Delta)} \cong \Tot(\Del_{H(\VV)^\Delta}).\]
By Proposition \ref{proposizione equivalenza gruppp semicosim e Tot} and by the local case, analysed in Theorem \ref{thm.equvalenza di funtori caso Stein}, we have 
\[ \Tot(\Del_{H(\VV)^\Delta}) \cong \Tot(\Def_{(\FF, \alpha, \GG)(\VV)^\Delta}).   \]
Here $\Def_{(\FF, \alpha, \GG)(\VV)^\Delta}$ is the semicosimplicial functor in groupoids 
\[
\xymatrix{ {\prod_i \Def_{(\FF, \alpha, \GG)(V_i)}}
\ar@<2pt>[r]\ar@<-2pt>[r] & {
\prod_{i<j}\Def_{(\FF, \alpha, \GG)(V_{ij})}}
      \ar@<4pt>[r] \ar[r] \ar@<-4pt>[r] &
      {\prod_{i<j<k}\Def_{(\FF, \alpha, \GG)(V_{ijk})}}
\ar@<6pt>[r] \ar@<2pt>[r] \ar@<-2pt>[r] \ar@<-6pt>[r]& \cdots},
\]
where $\Def_{(\FF, \alpha, \GG)(V)}$ is the functor of infinitesimal deformations of $\alpha_{|V}: \FF_{|V} \to \GG_{|V}$,  for any $V\subset X$ affine open subset that trivialises both $\FF$ and $\GG$.
Moreover, by a classical argument, global deformations are given by gluing local 
ones, then 
\[ \Tot(\Def_{(\FF, \alpha, \GG)(\VV)^\Delta}) \cong \Def_{(\FF, \alpha, \GG)},\]
as desired. 
\end{proof}
\begin{corollary}
In the above notation, the Thom-Whitney dgLa associated to the semicosimplicial dgLa $H(\VV)^\Delta$ controls the   infinitesimal deformations of the morphism of   locally free  sheaves $\alpha:\FF \to \GG$. 
\end{corollary}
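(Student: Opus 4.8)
The plan is to obtain the Corollary as an immediate consequence of Theorem \ref{thm.Del eq Def} by passing from groupoids to their sets of isomorphism classes. Recall that ``controlling'' a deformation problem, in the sense recalled after Definition \ref{def Deligne groupoid} and around Example \ref{ex. def E via semicosimpl}, means exhibiting an isomorphism between the \emph{set-valued} classical deformation functor and the deformation functor $\Def_L$ attached to a dgLa $L$. Here the candidate is $L = \Tot(H(\VV)^\Delta)$, and the classical functor is $\pi_0(\Def_{(\FF,\alpha,\GG)})$, which by Definition \ref{def.funtore def in gruppoidi di alpha} sends $A$ to the set of isomorphism classes of infinitesimal deformations of $\alpha$ over $A$.

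First I would invoke Theorem \ref{thm.Del eq Def}, which provides a natural equivalence of functors in groupoids $\Def_{(\FF,\alpha,\GG)} \cong \Del_{\Tot(H(\VV)^\Delta)}$. Then I would apply the connected components functor $\pi_0 \colon \Grpds \to \Set$. Since an equivalence of groupoids induces a bijection on isomorphism classes of objects, composing the natural equivalence with $\pi_0$ yields a natural isomorphism of set-valued functors
\[ \pi_0(\Def_{(\FF,\alpha,\GG)}) \;\cong\; \pi_0(\Del_{\Tot(H(\VV)^\Delta)}). \]
Finally, by the last sentence of Definition \ref{def Deligne groupoid}, for any dgLa $L$ there is an equivalence $\pi_0(\Del_L) \cong \Def_L$; taking $L = \Tot(H(\VV)^\Delta)$ identifies the right-hand side with $\Def_{\Tot(H(\VV)^\Delta)}$. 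Chaining the two isomorphisms gives $\pi_0(\Def_{(\FF,\alpha,\GG)}) \cong \Def_{\Tot(H(\VV)^\Delta)}$, which is precisely the assertion that $\Tot(H(\VV)^\Delta)$ controls the deformations of $\alpha$.

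The argument is essentially formal, and there is no serious obstacle: the only point worth stating explicitly is that a \emph{natural} equivalence of groupoid-valued functors induces a \emph{natural} isomorphism after applying $\pi_0$, with each component a bijection because equivalent groupoids share the same set of isomorphism classes of objects. This naturality in $A \in \Art_\kk$ is automatic, since $\pi_0$ is a functor and the equivalence of Theorem \ref{thm.Del eq Def} is natural; all the geometric content has already been absorbed into that theorem, so the Corollary is just its shadow on $\pi_0$.
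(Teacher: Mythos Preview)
Your proposal is correct and matches the paper's approach: the paper states this corollary without proof, treating it as an immediate consequence of Theorem~\ref{thm.Del eq Def} together with the identity $\pi_0(\Del_L)\cong\Def_L$ from Definition~\ref{def Deligne groupoid}. You have simply spelled out this one-line passage to $\pi_0$, which is exactly what is intended.
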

 \begin{remark}
We note that, for any coherent sheaf of dgLas $H$, the quasi isomorphism class of  the Thom-Whitney dgLa $ \Tot(H(\VV)^\Delta)$ does not depend on the choice of the affine open cover $\VV$ of $X$ \cite{FIM}. Then, the previous construction does not depend on the choice of the cover.
 \end{remark}

\begin{remark}\label{remark thom whit non dipende da verso}
 Let $\alpha: \FF \to \GG$ be a morphism of locally free sheaves of $\OO_X$-modules  on $X$ and let $\VV=\{V_i\}_i$ be an affine open cover of $X$, we can construct a bisemicosimplicial object associated to these data as follows. On every open set $V_i \in \VV$, we have
  \[  
\xymatrix{ {\Gamma(V_i, \LL) \oplus \Gamma(V_i, \EE nd(\FF) \oplus \EE nd (\GG)) }
\ar@<2pt>[r]^{\ \ \ \ \ \ \ \ \ \ h}\ar@<-2pt>[r]_{\ \ \ \ \ \ \ \ \ \ g} & {\Gamma(V_i, \EE nd (\FF \oplus \GG))}
      \ar@<4pt>[r] \ar[r] \ar@<-4pt>[r] & 0.}
\]
Since $h$ and $g$ are morphisms of sheaves, they commute with restrictions of every open subsets, inducing morphisms of the \v{C}ech semicosimplicial objects
  \[  
\xymatrix{ {  \LL(\VV) \oplus   \EE nd(\FF)(\VV) \oplus \EE nd (\GG)(\VV) }
\ar@<2pt>[r]^{\ \ \ \ \ \ \ \ \ \ h}\ar@<-2pt>[r]_{\ \ \ \ \ \ \ \ \ \ g} &   \EE nd (\FF \oplus \GG)(\VV)
      \ar@<4pt>[r] \ar[r] \ar@<-4pt>[r] & 0.}
\] 
In Theorem \ref{thm.Del eq Def}, we define, for every open set $V_i$, the dgLas $H(V_i,\FF, \alpha, \GG)$ as the Thom-Withney homotopy fiber products; they form a semicosimplicial dgLas $H(\VV)^\Delta$ and we consider the associated Thom-Withney dgLa  $\Tot(H(\VV)^\Delta)$. This construction does not depend on the order \cite{dona.rendiconti}. We could first consider the Thom-Withney dgLas of the \v{C}ech semicosimplicial sheaves of Lie algebras $\LL(\VV)$,  $\EE nd(\FF)(\VV)$,  $\EE nd (\GG)(\VV)$ and $\EE nd (\FF \oplus \GG)(\VV)$ and obtain the semicosimplicial dgLa
\begin{equation}\label{successione nella remark}
\xymatrix{ { \Tot( \LL(\VV) )\oplus   \Tot( \EE nd(\FF)(\VV)) \oplus \Tot( \EE nd (\GG)(\VV)) }
\ar@<2pt>[r]^{\ \  \quad \quad \quad \ \ \ \ \ \ h}\ar@<-2pt>[r]_{ \  \quad \quad \quad \ \ \ \ \ \  g} &   \Tot( \EE nd (\FF \oplus \GG)(\VV))
      \ar@<4pt>[r] \ar[r] \ar@<-4pt>[r] & 0}
\end{equation}
and then apply Thom-Withney homotopy fibre product. This dgLa is quasi isomorphic to $\Tot(H(\VV)^\Delta)$.
\end{remark}

Since the cohomology of the total complex of the semicosimplicial dgLa $H(\VV)^\Delta$ is the same as the one of the Thom-Whitney homotopy fibre, according to \cite[Section 4]{Dona.def maps}, we get the following exact sequence: 
\[  \ldots \to H^i(\Tot(H(\VV)^\Delta) \to H^i( X, \EE nd (\FF) \oplus \EE nd (\GG) \oplus \LL) \to \]
\[\to H^i(X, \EE nd (\FF \oplus \GG)) \to H^{i+1}(\Tot(H(\VV)^\Delta) \to  \ldots  \]
Moreover,
since the map  $h:    \LL   \to    \EE nd (\FF \oplus \GG) $ is injective,
thanks to \cite[Lemma 3.1]{Dona.def maps},  the following sequence is also exact:
\begin{equation} \label{succ ex lunga generale}
 \cdots H^i(\Tot(H(\VV)^\Delta) \to H^i( X, \EE nd (\FF) \oplus \EE nd (\GG)) \to H^i(X,\coker h)    \to H^{i+1}(X,\Tot(H(\VV)^\Delta)  \cdots .
\end{equation}

Note that, since $\Tot(H(\VV)^\Delta)$ controls   the infinitesimal deformations of $\alpha:\FF \to \GG$, its first  cohomology space is isomorphic to the tangent space of the functor $\Def_{(\FF, \alpha, \GG)}$, and its second cohomology space is an obstruction space.

\begin{remark} \label{rmk.sui numeri complessi}
If we work over the field of complex numbers, we can consider the  Dolbeault resolutions of the endomorphisms of sheaves, and avoid the \v{C}ech semicosimplicial dgLas. More precisely, we can consider the two morphisms of dgLas
  \[  
\xymatrix{ { A_X^{0,*}(\LL)\oplus  A_X^{0,*}( \EE nd(\FF)) \oplus A_X^{0,*}(\EE nd (\GG)) }
\ar@<2pt>[r]^{\ \  \quad \quad \quad   h}\ar@<-2pt>[r]_{ \  \quad \quad \quad  g} &  A_X^{0,*}(\EE nd (\FF \oplus \GG)),}
\]  analogous to the previous $h$ and $g$.
Then, the associated Thom-Whitney homotopy fibre product of Definition  \ref{defi thom with coppia morfismi} is a dgLa that controls infinitesimal deformations of $\alpha: \FF \to \GG$. Note that this construction is the analogous of the one in Equation \eqref{successione nella remark} of   Remark \ref{remark thom whit non dipende da verso}.

\end{remark}

\section{Deformations of a locally free sheaf and a subspace of sections}\label{sezione deformazioni (E,U)}
In this section, we would like to apply the previous results to    the infinitesimal deformations of a locally free sheaf with a subspace of its sections, finding a dgLa that controls them.

\subsection{Geometric deformations.}

Let $X$ be a smooth projective variety over $\mathbb{K}$, $\EE$ be  a locally free sheaf of $\OO_X$-modules on $X$ and $U\subseteq H^0(X,\EE)$ be  a subspace of global sections of $\EE$.
We study the infinitesimal deformations of $\EE$ which preserve the subspace $U$.
We start with some definitions. 

\begin{definition} \label{def.(E,U)}
Let $A\in \Art_\kk$. An \emph{infinitesimal deformation} of the pair $(\EE, U)$  over $A$  is the data $(\EE_A,\pi_A, i_A)$ of:
\begin{itemize}
\item a deformation $(\EE_A, \pi_A)$ of $\EE$ over $A$,
\item a morphism $i_A:U\otimes A \rightarrow H^0(X\times \Spec A,  \EE_A)$,
\end{itemize}
such that the following diagram commutes
\begin{equation}  \label{def(E,U)}
\xymatrix{ U\otimes A \ar[d]^{\pi} \ar[r]^-{i_A} & H^0(X\times \Spec A,\EE_A) \ar[d]^{\pi_A}\\
U \ar@{^{(}->}[r]^-{i} & H^0(X, \EE).     }
\end{equation}

Two deformations  $(\EE_A,\pi_A,i_A)$, $(\EE'_A,\pi'_A,i'_A)$ are \emph{isomorphic} if there exist an isomorphism $\phi:\EE_A \rightarrow \EE'_{A}$ of sheaves of $\OO_X \otimes A$-modules,  such that
 $\pi'_A \circ \phi=\pi_A$ 
and an isomorphism $\psi:U\otimes A \to U\otimes A$,  that makes the diagram commutative:
\[
\xymatrix{ U\otimes A \ar[d]^{\psi}   \ar[r]^-{i_A}& H^0(X\times \Spec A,\EE_A) \ar[d]^{\phi} \\
             U\otimes A \ar[r]^-{i'_A}& H^0(X\times \Spec A,\EE'_A) . }
\]

Note that, this implies that $\phi$ induces an isomorphism  $\phi: i_A(U\otimes A) \rightarrow i'_A(U\otimes A)$. In the following, we will often shorten the notation of such deformations with $(\EE_A, i_A)$. 
\end{definition}

\begin{definition} 
The \emph{functor of infinitesimal deformations of $(\EE,U)$} is
\[ \Def_{(\EE,U)}: \Art_\kk \to \Grpds,\]
that associates to any $A \in \Art_\kk$ the groupoid $\Def_{(\EE,U)}(A)$, whose objects are the infinitesimal deformations of the pair $(\EE, U)$ over  $A$ and whose morphisms are the isomorphisms among them.
 Note that  
\[ \pi_0(\Def_{(\EE, U))}(A)) = \{\mbox{isomorphism classes of deformations of the pair $(\EE, U)$ over $A$}\} \]
is the classical functor of deformations in $\Set$.
\end{definition}

\begin{remark} \label{rmk.equivalenza def coppia e morfismo}
Let $X$ be a smooth projective variety over $\mathbb{K}$ with $H^1(X, \OO_X)=0$. Let $\EE$ be  a locally free sheaf of $\OO_X$-modules on $X$ and $U\subseteq H^0(X,\EE)$   a subspace of global sections of it. Let $s: U \otimes \OO_X \to \EE$ be the morphism of sheaves of $\OO_X$-modules associated to $i: U \hookrightarrow H^0(\EE)$ via the obvious correspondence between global sections and morphisms from $\OO_X$ to $\EE$.

According to Definition \ref{def.funtore def in gruppoidi di alpha}, we denote by
 \[ \Def_{(U\otimes \OO_X,s, \EE)} : \Art_\kk \to \Grpds,\]
the functor of infinitesimal deformations of the morphism $s$, 
that associates, to any $A \in \Art_\kk$, the groupoid $\Def_{(U\otimes \OO_X,s, \EE)}(A)$, whose objects are the infinitesimal deformations of the morphism $s$  over
$A$ and whose morphisms are the isomorphisms  between them.

We claim that, for any $A\in \Art_\kk$ there is a 1-1 correspondence between the objects  of the groupoids 
\[ \Def_{(\EE, U)} (A)\to \Def_{(U\otimes \OO_X,s, \EE)}(A).\]
Indeed, 
given a deformation $(\EE_A, i_A)$ of $(\EE, U)$ over $A$ as in Definition \ref{def.(E,U)}, there is just one morphism of sheaves of $\OO_X\otimes A$-modules $s_A:U\otimes \OO_X\otimes A \to \EE_A$ associated to  $i_A$, it makes the following diagram commutative
\begin{equation} \label{def s}  
\xymatrix{ U\otimes \OO_X \otimes A \ar[d]^{\pi} \ar[r]^-{s_A} &\EE_A \ar[d]^{\pi_A}\\
U \otimes \OO_X \ar[r]^-{s} &\EE,    }
\end{equation}
and it defines an object of $\Def_{(U\otimes \OO_X, s, \EE)}(A)$.
On the other hand, since $H^1(X, \OO_X)=0$, every infinitesimal deformation of the domain $U\otimes \OO_X$  is trivial. A deformation of $s$ over $A$ is just the data of a deformation $\EE_A$ of $\EE$ and of a map   $s_A:U\otimes \OO_X\otimes A \to \EE_A$, such that the diagram (\ref{def s}) is commutative.
Taking global sections, one gets the required deformed map $i_A: U\otimes A \to H^0(X \times \Spec A, \EE_A)$ and the commutative diagram (\ref{def(E,U)}).

Analogously, we can prove that isomorphisms of deformations correspond each others. This assures that, under the hypothesis $H^1(X, \OO_X)=0$, the functors $\Def_{(\EE, U)}$ and $\Def_{(U\otimes \OO_X,s, \EE)}$ are  are equivalent.
\end{remark}

In \cite{elenatesi, ElenaFormal}, the second author introduced the problem  of deformations of the pair $(\EE,U)$ and found a dgLa that controls it on a variety $X$ over the field of complex numbers.
In \cite{DE Brill-Noether}, we studied these deformations  in more details and  we generalised to vector bundles over a smooth complex projective variety   some classical results known  for line bundles over curves, concerning the description of the tangent space, the smoothness of the functor $\Def_{(\EE,U)}$ and  the liftings of a section.  Here, we would like to study  the infinitesimal deformations of the pair $(\EE,U)$ over  an algebraically closed field  $\mathbb{K}$ of characteristic zero using the analysis of the previous sections. 
 
\subsection{The dgLa that controls deformations of $(\EE,U)$}
Let $X$ be a smooth projective variety over $\mathbb{K}$, we assume here $H^1(X, \OO_X)=0$. Let $\EE$ be  a locally free sheaf of $\OO_X$-modules on $X$ and $U\subseteq H^0(X,\EE)$ be  a subspace of global sections of it.

 Let $s: U \otimes \OO_X \to \EE$ be the morphism of sheaves of $\OO_X$-modules associated to $i: U \hookrightarrow H^0(\EE)$.
As observed in Remark \ref{rmk.equivalenza def coppia e morfismo}, the infinitesimal deformations of the pair $(\EE, U)$ are equivalent to the infinitesimal deformations of the morphism $s$. We indicate with $\gamma$ the graph of the morphism $s$, that is a subsheaf of $(U\otimes \OO_X) \oplus \EE$.

Let $\VV=\{  V_i \}_i$ be an affine open cover of $X$, we denote by $ \EE nd (\EE), \EE nd(U\otimes \OO_X)$ and $\EE nd \left((U\otimes \OO_X) \oplus \EE\right)$ the sheaves of $O_X $-modules endomorphisms of $\EE, U\otimes \OO_X$ and $(U\otimes \OO_X) \oplus \EE$, respectively. 
Let $H(\VV)^\Delta$ be  the Cech semicosimplicial dgLa associated to
the Thom-Whitney homotopy fibres of the diagram of sheaves of Lie algebras
\[ \xymatrix{  & \LL \ar[d]^h \\ \EE nd(U\otimes \OO_X) \oplus \EE nd (\EE) \ar[r]^-g
 &   \EE nd \left((U\otimes \OO_X) \oplus \EE\right),} \]
where  $h: \LL = \{ \varphi \in \EE nd \left((U\otimes \OO_X) \oplus \EE\right) \mid \varphi(\gamma) \subseteq \gamma \} \to   \EE nd \left((U\otimes \OO_X) \oplus \EE\right)$  is the inclusion and $g$ is defined as the map 
$(\varphi, \psi) \mapsto\left( \begin{array}{cc} \varphi & 0 \\ 0 & \psi\end{array}\right).
$
Applying the results of the previous section, we get the following result

\begin{theorem} \label{thm. def (E,U) usando tot} 
   In the above notation and under the hypothesis $H^1(X,\OO_X)=0$, the functor $\Def_{(\EE, U)}$ of deformations of $(\EE,U)$ is equivalent to the Deligne functor associated to the Thom-Whitney dgLa of the semicosimplicial dgLa $H(\VV)^\Delta$.
Moreover, under the additional hypothesis that $H^2(X,\OO_X)=0$, the cohomology of the dgLa $\Tot(H(\VV)^\Delta)$ fits into the exact sequence 
\begin{equation} \label{eq.theorem}
\begin{split} 
0 \to H^0(\Tot(H(\VV)^\Delta)) \to H^0(X, \EE nd (\EE)) 
 \to \Hom(U, H^0(\EE)/U) \to \\ \to H^1(\Tot(H(\VV)^\Delta)) \to H^1(X,\EE nd (\EE))   \stackrel{\alpha}{\to}  
 \Hom (U, H^{1}(X,\EE))  \to \\ \to  H^2(\Tot(H(\VV)^\Delta))  	\to  H^2(X,\EE nd (\EE)) \to\Hom(U, H^2(X,\EE)).
 \end{split}
\end{equation}
\end{theorem}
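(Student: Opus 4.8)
The plan is to specialise the general machinery of Section~\ref{sezione deformazione mappe} to the morphism $s\colon U\otimes\OO_X\to\EE$. For the equivalence of functors I would combine two results already available. By Remark~\ref{rmk.equivalenza def coppia e morfismo}, the assumption $H^1(X,\OO_X)=0$ makes every infinitesimal deformation of $U\otimes\OO_X$ trivial and therefore yields an equivalence $\Def_{(\EE,U)}\cong\Def_{(U\otimes\OO_X,s,\EE)}$. As $U\otimes\OO_X$ and $\EE$ are locally free, Theorem~\ref{thm.Del eq Def} applies to $s$ with the cover $\VV$ and gives $\Def_{(U\otimes\OO_X,s,\EE)}\cong\Del_{\Tot(H(\VV)^\Delta)}$; composing the two equivalences proves the first assertion.

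For the exact sequence I would start from the long exact sequence~\eqref{succ ex lunga generale} associated to the inclusion $h$, now with $\FF=U\otimes\OO_X$ and $\GG=\EE$. The first task is to identify $\coker h$. Since the graph $\gamma\cong U\otimes\OO_X$ is a sub-bundle of $(U\otimes\OO_X)\oplus\EE$ with quotient isomorphic to $\EE$, the stabiliser $\LL$ is the kernel of the natural surjection $\EE nd((U\otimes\OO_X)\oplus\EE)\to\Hom(\gamma,\EE)$ that sends $\varphi$ to the composite $\gamma\hookrightarrow(U\otimes\OO_X)\oplus\EE\xrightarrow{\varphi}(U\otimes\OO_X)\oplus\EE\twoheadrightarrow\EE$. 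Hence $\coker h\cong\Hom(U\otimes\OO_X,\EE)$ and $H^i(X,\coker h)\cong\Hom(U,H^i(X,\EE))$; tracing this identification, the middle map of~\eqref{succ ex lunga generale} is induced by $(a,d)\mapsto d\circ s-s\circ a$.

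Next I would compute the central terms. Because $U\otimes\OO_X$ is free, $\EE nd(U\otimes\OO_X)\cong\End(U)\otimes\OO_X$, so $H^i(X,\EE nd(U\otimes\OO_X))\cong\End(U)\otimes H^i(X,\OO_X)$; the hypotheses $H^1(X,\OO_X)=H^2(X,\OO_X)=0$ make this vanish for $i=1,2$ and equal to $\End(U)$ for $i=0$. Thus in degrees $i=1,2$ the sequence already reads $H^i(\Tot(H(\VV)^\Delta))\to H^i(X,\EE nd(\EE))\to\Hom(U,H^i(X,\EE))$, exactly as in~\eqref{eq.theorem}. Only degree $0$ carries the extra summand $\End(U)$, and here I would note that the composite $\End(U)\hookrightarrow\End(U)\oplus H^0(X,\EE nd(\EE))\to\Hom(U,H^0(X,\EE))$ is, up to sign, the inclusion $\End(U)=\Hom(U,U)\hookrightarrow\Hom(U,H^0(X,\EE))$ induced by $U\subseteq H^0(X,\EE)$; it is injective, and since $\Hom(U,-)$ is exact, its cokernel is $\Hom(U,H^0(X,\EE)/U)$.

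The only genuine work is this degree-zero bookkeeping. A short diagram chase — equivalently, the snake lemma applied to the inclusion of the two-term complex $[\End(U)\xrightarrow{\ \sim\ }\Hom(U,U)]$ into the relevant portion of~\eqref{succ ex lunga generale} — shows that the projection to the $H^0(X,\EE nd(\EE))$-factor carries $H^0(\Tot(H(\VV)^\Delta))$ isomorphically onto $\{d\in H^0(X,\EE nd(\EE))\mid d(U)\subseteq U\}$, that the connecting map leaving $\Hom(U,H^0(X,\EE))$ kills $\Hom(U,U)$ and hence descends to $\Hom(U,H^0(X,\EE)/U)$, and that exactness is preserved at every spot. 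Splicing this reduced initial segment to the tail computed above gives precisely~\eqref{eq.theorem}; the sequence begins with $0\to H^0(\Tot(H(\VV)^\Delta))$ because $\Tot(H(\VV)^\Delta)$ is concentrated in non-negative degrees, so both its negative cohomology and that of the relevant sheaves vanish.
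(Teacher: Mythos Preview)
Your proof is correct and follows essentially the same route as the paper: both deduce the equivalence of functors by combining Remark~\ref{rmk.equivalenza def coppia e morfismo} with Theorem~\ref{thm.Del eq Def}, and both derive the exact sequence from~\eqref{succ ex lunga generale} by identifying $\coker h\cong\HH om(U\otimes\OO_X,\EE)$, using $H^i(X,\OO_X)=0$ for $i=1,2$ to kill the $\EE nd(U\otimes\OO_X)$-contributions, and then eliminating the redundant $\Hom(U,U)$ summand in degree~$0$. Your snake-lemma framing of the degree-zero reduction and the explicit description of $H^0(\Tot(H(\VV)^\Delta))$ as the endomorphisms preserving $U$ are slightly more detailed than the paper's one-line observation that the $\Hom(U,U)$-component of the map $f$ is the identity, but the underlying argument is the same.
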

 
 \begin{proof} 
The equivalence
\[ \Def_{(\EE,U)} \cong \Del_{\Tot(H(\VV)^\Delta)} \]
is a direct consequence of Theorem \ref{thm.Del eq Def}  and Remark \ref{rmk.equivalenza def coppia e morfismo}. 
According to the sequence  \eqref{succ ex lunga generale}, the cohomology of the dgLa $\Tot(H(\VV)^\Delta)$ fits in the exact sequence 
\begin{equation}\label{succ ex 1}
 \cdots H^i(\Tot(H(\VV)^\Delta) \to H^i( X, \EE nd (U\otimes \OO_X) \oplus \EE nd (\EE)) \to H^i(X, \coker h)    \to H^{i+1}(\Tot(H(\VV)^\Delta)  \cdots.
\end{equation}
Observe that, since $X$ is projective, 
\[ H^0( X, \EE nd(U\otimes \OO_X) \oplus \EE nd (\EE))= \Hom(U,U) \oplus H^0(X, \EE nd (\EE)); \] 
while for $i=1,2$, under the hypothesis $H^i(X, \OO_X)=0$, we get
\[   H^i( X, \EE nd (U\otimes \OO_X) \oplus \EE nd (\EE))=\left(H^i(X, \OO_X)\otimes U^\vee \otimes U\right) \oplus  H^i(X, \EE nd (\EE) =H^i(X, \EE nd (\EE)).\]
Moreover, we can describe explicitly the $\coker h$ via the following isomorphisms
\[  \coker h \cong \HH om\left(\gamma, \frac{(U\otimes \OO_X) \oplus \EE}{\gamma}\right) \cong \HH om(U\otimes \OO_X, \EE). \]
The first one is  given by restricting an endomorphism of $(U\otimes \OO_X) \oplus \EE $ to $\gamma$ and projecting it to the quotient. The second one is obtained just observing that $U\otimes \OO_X \cong \gamma$ and $\displaystyle\frac{(U\otimes \OO_X) \oplus \EE}{\gamma} \cong \EE$. 
Thus, for $i=0$, we have
\[ H^0(\coker h)= H^0(  \HH om(U\otimes \OO_X, \EE)) =\Hom(U, H^0(X, \EE))= \Hom(U, U) \oplus \Hom\left(U, \frac{H^0(X,\EE)}{U}\right), \]
where the last equality follows applying $\Hom(U, -)$ to the exact sequence of vector spaces
\[ 0\to U \to H^0(X, \EE) \to H^0(X,\EE)/U \to 0. \]
While, for $i=1,2$, we have
\[H^i(\coker h)= H^i(\HH om(U\otimes \OO_X, \EE)) = \Hom(U, H^i(X, \EE)).\]
Thus, the first terms of the exact sequence (\ref{succ ex 1}) become
\begin{equation}  \label{succ ex 2}
0 \to H^0(\Tot(H(\VV)^\Delta)) \to \Hom(U,U)  \oplus H^0(X, \EE nd (\EE)) \stackrel{f}{\to}  
  \Hom(U, U) \oplus \Hom\left(U, \frac{H^0(X,\EE)}{U}\right) \to \
  \end{equation}
\[
\to H^1(\Tot(H(\VV)^\Delta)) \to H^1(X,\EE nd (\EE))  \to 
 \Hom (U, H^{1}(X,\EE))  
  \to  H^2(\Tot(H(\VV)^\Delta))  	\to  \]
  \[\to H^2(X,\EE nd (\EE)) \to\Hom(U, H^2(X,\EE)).\]
The last step is to make explicit the morphism $f$ of (\ref{succ ex 2}). It is immediate to see that $f$ is the morphism
\[ \Hom(U,U) \oplus H^0(X, \EE nd (\EE)) \stackrel{f}{\to}  
  \Hom(U, U) \oplus \Hom\left(U, \frac{H^0(X,\EE)}{U}\right)\!,  \]
whose first component is the identity on $ \Hom(U, U)$. Thus the exact sequence (\ref{succ ex 2}) induces the sequence \eqref{eq.theorem}. 
\end{proof}

 The exact sequence \eqref{eq.theorem}  of Theorem \ref{thm. def (E,U) usando tot}  is a generalisation over any algebraically closed field of characteristic zero of the exact sequence (7) in \cite{DE Brill-Noether},  from which we get most of our results, that now can be immediately generalised to    any  algebraically closed field $\kk$ of characteristic zero.  

Let us denote by $r_U: \Def_{(\EE,U)} \to \Def_\EE$, the forgetful functor, i.e., the functor that ignores all information about $U$. Then, under the hypothesis $H^i(X,\OO_X)=0$  for $i=1,2$, we have the following results, that holds for over  any  algebraically closed field $\kk$ of characteristic zero.

\begin{corollary} \label{corollario alpha su implica r liscio} \cite[Corollary 3.11]{DE Brill-Noether}
If the map $\alpha: H^1(X,\EE nd (\EE)) 
\to \Hom (U, H^{1}(X,\EE))$ that appears in the exact sequence \eqref{eq.theorem} is surjective or, that is equivalent, if $\Hom(U, H^1(X,\EE))=0$, then  the forgetful morphism $r_U: \Def_{(\EE,U)} \to \Def_\EE$ is smooth. \end{corollary}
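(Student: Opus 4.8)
The plan is to realise the forgetful morphism $r_U$ as the map of deformation functors induced by a morphism of the controlling dgLas, and then to apply the standard fact that a morphism of dgLas $\phi\colon L\to M$ induces a \emph{smooth} morphism $\Def_L\to\Def_M$ as soon as $H^1(\phi)$ is surjective and $H^2(\phi)$ is injective (see \cite{Manetti.LMDT}); the surjectivity of $H^1(\phi)$ supplies tangent surjectivity and the injectivity of $H^2(\phi)$ supplies injectivity on obstructions. The two properties will be read directly off the exact sequence \eqref{eq.theorem}.

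First I would produce the dgLa morphism. By Theorem \ref{thm. def (E,U) usando tot}, $\Def_{(\EE,U)}$ is controlled by $\Tot(H(\VV)^\Delta)$, while $\Def_\EE$ is controlled by $\Tot(\EE nd(\EE)(\VV))$ (Example \ref{ex. def E via semicosimpl}). On each open set the projection $(x,y,z(t,dt))\mapsto y$ of the Thom-Whitney homotopy fibre onto its factor $\EE nd(U\otimes \OO_X)\oplus\EE nd(\EE)$ is a morphism of dgLas, since brackets and differentials are componentwise; composing with the projection onto the $\EE nd(\EE)$-summand and passing to Thom-Whitney dgLas yields a morphism $\phi\colon \Tot(H(\VV)^\Delta)\to\Tot(\EE nd(\EE)(\VV))$. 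I would then check that the map of functors induced by $\phi$ is exactly $r_U$: it discards the subsheaf $\LL$ (the graph $\gamma$) together with the deformation of $U\otimes \OO_X$, which is trivial because $H^1(X,\OO_X)=0$, and retains the deformation of $\EE$.

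The key observation is that $\phi$ is precisely the morphism whose associated long exact sequence is \eqref{succ ex lunga generale}, specialised as in \eqref{eq.theorem}: using $H^i(X,\OO_X)=0$ for $i=1,2$ to annihilate the $\EE nd(U\otimes \OO_X)$-contributions, the maps $H^1(\phi)$ and $H^2(\phi)$ coincide with the maps $a\colon H^1(\Tot(H(\VV)^\Delta))\to H^1(X,\EE nd(\EE))$ and $c\colon H^2(\Tot(H(\VV)^\Delta))\to H^2(X,\EE nd(\EE))$ occurring there. Writing $b\colon \Hom(U,H^1(X,\EE))\to H^2(\Tot(H(\VV)^\Delta))$ for the connecting map, exactness of \eqref{eq.theorem} gives $\im(\alpha)=\ker(b)$ and $\ker(c)=\im(b)$. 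Surjectivity of $\alpha$ then forces $b=0$, whence $c$ is injective; and the equivalent vanishing $\Hom(U,H^1(X,\EE))=0$ makes the target of $\alpha$ trivial, so the relevant stretch of \eqref{eq.theorem} reads $H^1(\Tot(H(\VV)^\Delta))\xrightarrow{a}H^1(X,\EE nd(\EE))\to 0\to H^2(\Tot(H(\VV)^\Delta))\xrightarrow{c}H^2(X,\EE nd(\EE))$, giving at once that $a$ is surjective and $c$ injective.

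With $H^1(\phi)=a$ surjective and $H^2(\phi)=c$ injective, the cited criterion yields that $\phi$ induces a smooth morphism of deformation functors, that is, $r_U\colon\Def_{(\EE,U)}\to\Def_\EE$ is smooth. I expect the only delicate point to be the middle step: verifying that the formal projection $\phi$ genuinely computes the forgetful functor $r_U$, and that $H^1(\phi),H^2(\phi)$ are identified with the maps $a,c$ of \eqref{eq.theorem} rather than merely fitting into an abstractly similar sequence. Granting this identification, the extraction from the exact sequence and the final appeal to the smoothness criterion are routine.
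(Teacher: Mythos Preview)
Your proposal is correct and follows precisely the argument the paper has in mind. The paper itself does not spell out a proof of this corollary---it simply records that the exact sequence \eqref{eq.theorem} generalises the one in \cite{DE Brill-Noether} and that the corollaries there carry over verbatim---so your write-up is in fact more detailed than what the paper provides. The route is the same: realise $r_U$ via the dgLa projection $\Tot(H(\VV)^\Delta)\to\Tot(\EE nd(\EE)(\VV))$, read off from \eqref{eq.theorem} that $H^1(\phi)$ is surjective and $H^2(\phi)$ is injective under the hypothesis, and invoke the standard smoothness criterion for morphisms of dgLa-controlled deformation functors.

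One small remark: your argument for surjectivity of $a=H^1(\phi)$ uses the vanishing $\Hom(U,H^1(X,\EE))=0$ rather than the surjectivity of $\alpha$ per se; you are right to do so, since surjectivity of $\alpha$ alone would give only $b=0$ (hence $c$ injective) but not $a$ surjective. The corollary asserts the two hypotheses are equivalent, so you are entitled to use either formulation, but it is worth being aware that the vanishing is the condition that actually does the work in your proof. The ``delicate point'' you flag---that $\phi$ genuinely induces $r_U$ and that $H^i(\phi)$ agrees with the maps in \eqref{eq.theorem}---is real but routine: it follows from the construction of \eqref{succ ex lunga generale} together with the simplification $H^i(X,\EE nd(U\otimes\OO_X))=0$ for $i=1,2$ carried out in the proof of Theorem \ref{thm. def (E,U) usando tot}.
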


\begin{corollary}\label{corcor}  \cite[Corollary 3.13]{DE Brill-Noether} In the notation above, we have 
 \[ \dim t_{\Def_{(\EE,U)}} \geq \dim t_{\Def_{\EE}}
 - m \cdot \dim H^1(X,\EE),\]
 where $m$ is the dimension of $U \subseteq H^0(X,\EE)$.  
\end{corollary}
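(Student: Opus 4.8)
The plan is to read off the inequality directly from the long exact sequence \eqref{eq.theorem}, interpreting the relevant cohomology groups as tangent spaces. First I would recall that, since $\Tot(H(\VV)^\Delta)$ controls $\Def_{(\EE,U)}$ by Theorem \ref{thm. def (E,U) usando tot} and $\EE nd(\EE)$ controls $\Def_\EE$ by Example \ref{ex. def E via semicosimpl}, one has the identifications
\[ t_{\Def_{(\EE,U)}} = H^1(\Tot(H(\VV)^\Delta)), \qquad t_{\Def_\EE} = H^1(X, \EE nd(\EE)). \]
Since $X$ is projective, all these spaces are finite-dimensional, so the dimension count below makes sense. The map $H^1(\Tot(H(\VV)^\Delta)) \to H^1(X,\EE nd(\EE))$ appearing in \eqref{eq.theorem} is the one induced by the forgetful functor $r_U$, but for the bound I only need the exactness of the sequence, not this interpretation.

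Next I would extract the estimate from the segment
\[ H^1(\Tot(H(\VV)^\Delta)) \to H^1(X,\EE nd(\EE)) \stackrel{\alpha}{\to} \Hom(U, H^1(X,\EE)) \]
of \eqref{eq.theorem}. By exactness, the image of the first map equals $\ker \alpha$, whence $\dim t_{\Def_{(\EE,U)}} = \dim H^1(\Tot(H(\VV)^\Delta)) \geq \dim \ker \alpha$. Applying rank-nullity to $\alpha$ gives $\dim \ker \alpha = \dim H^1(X,\EE nd(\EE)) - \dim \im \alpha$, and since $\im \alpha \subseteq \Hom(U, H^1(X,\EE))$ I bound $\dim \im \alpha \leq \dim \Hom(U, H^1(X,\EE)) = m \cdot \dim H^1(X,\EE)$, where $m = \dim U$. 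Combining these yields
\[ \dim t_{\Def_{(\EE,U)}} \geq \dim H^1(X,\EE nd(\EE)) - m \cdot \dim H^1(X,\EE) = \dim t_{\Def_\EE} - m \cdot \dim H^1(X,\EE), \]
as claimed.

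There is no genuine obstacle: the statement is a formal consequence of exactness together with the elementary computation $\dim \Hom(U, H^1(X,\EE)) = m \cdot \dim H^1(X,\EE)$. The only point that requires care has already been settled in the proof of Theorem \ref{thm. def (E,U) usando tot}, namely that the hypotheses $H^i(X,\OO_X)=0$ for $i=1,2$ are precisely what put sequence \eqref{eq.theorem} into the stated shape; in particular they collapse the contribution of $\EE nd(U\otimes \OO_X)$ so that the middle term is exactly $H^1(X,\EE nd(\EE)) = t_{\Def_\EE}$ and not something larger, which is what makes the comparison with $\dim t_{\Def_\EE}$ sharp.
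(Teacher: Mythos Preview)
Your proof is correct and follows exactly the approach implied by the paper: the corollary is stated there without an explicit proof, simply as a consequence of the exact sequence \eqref{eq.theorem} (with a reference to \cite[Corollary 3.13]{DE Brill-Noether}), and your argument via exactness and rank--nullity is precisely how one reads off the inequality from that sequence.
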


\begin{corollary} \label{prop.sezione si estende se cup product va a zero} \cite[Corollary 3.11]{DE Brill-Noether}  A section $s  \in H^0(X,\EE)$ can be extended to a section of a first order deformation of $\EE$ associated to an element  $a \in H^1(X, \EE nd (\EE))$ if and only if $a \cup s =0 \in H^1(X,\EE)$.
\end{corollary}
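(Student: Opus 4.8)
The plan is to deduce the statement directly from the exact sequence \eqref{eq.theorem} of Theorem \ref{thm. def (E,U) usando tot}, applied to the one-dimensional subspace $U = \langle s\rangle \subseteq H^0(X,\EE)$ (if $s=0$ there is nothing to prove). First I would reformulate the geometric condition in the language of the pair functor. A first order deformation of $\EE$ is a tangent vector $a \in H^1(X, \EE nd (\EE)) = t_{\Def_\EE}$, and the corresponding deformation $\EE_A$ over $A=\kk[\epsilon]/(\epsilon^2)$ extends $s$ precisely when there exists $\tilde s \in H^0(\EE_A)$ with $\pi_A(\tilde s)=s$; for $U=\langle s\rangle$ this $\tilde s$ is exactly the extra datum $i_A(s)$ needed to promote $a$ to a first order deformation of the pair $(\EE,\langle s\rangle)$, that is, to a tangent vector of $\Def_{(\EE,U)}$ mapping to $a$ under the forgetful functor $r_U$. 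Hence $s$ extends if and only if $a$ lies in the image of the tangent map $H^1(\Tot(H(\VV)^\Delta)) \to H^1(X, \EE nd (\EE))$ appearing in \eqref{eq.theorem}.

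By exactness of \eqref{eq.theorem}, that image equals $\ker\alpha$, so everything reduces to identifying the map $\alpha\colon H^1(X,\EE nd (\EE)) \to \Hom(U, H^1(X,\EE))$ with the cup product. For this I would use the explicit description of $\coker h$ obtained in the proof of Theorem \ref{thm. def (E,U) usando tot}: under the isomorphism $\coker h \cong \HH om(U\otimes \OO_X, \EE)$, the sheaf map inducing $\alpha$ is the composite $\EE nd (\EE) \hookrightarrow \EE nd ((U\otimes\OO_X)\oplus\EE) \to \coker h$, which sends $\psi$ to $\psi\circ s$ (restrict $\mathrm{diag}(0,\psi)$ to the graph $\gamma$ and project to the quotient). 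On $H^1$ this is precisely the map induced by the evaluation pairing $\EE nd (\EE)\otimes \EE \to \EE$, so $\alpha(a)(s) = a\cup s$. Since $U=\langle s\rangle$ is one-dimensional, $\Hom(U, H^1(X,\EE)) \cong H^1(X,\EE)$ and $a\in\ker\alpha$ if and only if $a\cup s = 0$, which closes the argument.

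The step requiring the most care is this identification of $\alpha$ with the cup product, since it amounts to tracing the connecting map of \eqref{eq.theorem} through the totalisation. I would confirm it by a direct \v{C}ech computation that also gives a self-contained proof: represent $a$ by a cocycle $\{a_{ij}\}$ with respect to a trivialising cover $\VV$, so that $\EE_A$ has transition automorphisms $\mathrm{id}+\epsilon\, a_{ij}$ reducing to the identity modulo $\epsilon$. Writing a candidate extension as $\tilde s = \{s+\epsilon\, u_i\}$ with $u_i \in \Gamma(V_i,\EE)$, the gluing condition $\tilde s_i = (\mathrm{id}+\epsilon\, a_{ij})\tilde s_j$ on overlaps becomes, modulo $\epsilon^2$, the system $u_i - u_j = a_{ij}(s)$ on $V_{ij}$. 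Thus $\tilde s$ exists if and only if the $1$-cochain $\{a_{ij}(s)\} = \{(a\cup s)_{ij}\}$ is a \v{C}ech coboundary, i.e. if and only if $a\cup s = 0 \in H^1(X,\EE)$, matching the obstruction read off from the exact sequence.
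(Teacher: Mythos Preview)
Your proposal is correct and follows essentially the same approach as the paper: the corollary is stated there without a separate proof, as an immediate consequence of the exact sequence \eqref{eq.theorem} applied via the forgetful map $r_U$, and the identification of $\alpha$ with the cup product is used explicitly later in the proof of Theorem \ref{teorema tangente Def^r}. Your added \v{C}ech computation and the explicit tracing of $\alpha$ through the isomorphism $\coker h \cong \HH om(U\otimes\OO_X,\EE)$ make the argument more self-contained than what the paper provides, but the underlying strategy is the same.
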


\begin{corollary} \label{Cor. TDef(E,H0(E))} \cite[Corollary 3.15]{DE Brill-Noether}
The tangent space to the deformations of the pair $(\EE, H^0(\EE))$ can be identified with
\[ t_{\Def_{(\EE,H^0(\EE))}} = \{ a \in H^1(X,\EE nd (\EE)) \ \mid \ a \cup s =0, \ \forall  \ s \in H^0(X,\EE) \}.  \]
\end{corollary}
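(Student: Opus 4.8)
The plan is to reduce the statement to a cohomology computation in the special case $U = H^0(X, \EE)$, and then to read off the desired kernel directly from the exact sequence \eqref{eq.theorem}. First I would recall that, by Theorem \ref{thm. def (E,U) usando tot}, the functor $\Def_{(\EE, U)}$ is equivalent to $\Del_{\Tot(H(\VV)^\Delta)}$; since the tangent space of a Deligne functor agrees with that of the associated deformation functor $\pi_0$, and the tangent space to $\Def_L$ is $H^1(L)$, we have $t_{\Def_{(\EE, U)}} = H^1(\Tot(H(\VV)^\Delta))$. Thus it suffices to identify this cohomology group with the asserted set when $U = H^0(X, \EE)$.

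Next I would specialise the exact sequence \eqref{eq.theorem} to $U = H^0(X, \EE)$. In this case the quotient $H^0(X, \EE)/U$ is zero, so the term $\Hom(U, H^0(\EE)/U)$ vanishes. The sequence then yields the exact segment
\[ 0 \to H^1(\Tot(H(\VV)^\Delta)) \to H^1(X, \EE nd(\EE)) \stackrel{\alpha}{\to} \Hom(U, H^1(X, \EE)), \]
where injectivity of the left-hand arrow follows from the vanishing of the preceding term. Hence $H^1(\Tot(H(\VV)^\Delta))$ is identified with $\ker \alpha \subseteq H^1(X, \EE nd(\EE))$.

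The final step is to make the map $\alpha$ explicit. Tracing it through the construction in the proof of Theorem \ref{thm. def (E,U) usando tot}, $\alpha$ is induced by $g$ composed with the projection onto $\coker h \cong \HH om(U \otimes \OO_X, \EE)$. Restricting the diagonal endomorphism with components $0$ and $a$ to $\gamma \cong U \otimes \OO_X$ and projecting to the quotient $\cong \EE$ sends a section $s$ to $a \cup s$; therefore $\alpha(a)$ is the homomorphism $s \mapsto a \cup s \in H^1(X, \EE)$, which is precisely the cup-product description already recorded in Corollary \ref{prop.sezione si estende se cup product va a zero}. Consequently
\[ \ker \alpha = \{ a \in H^1(X, \EE nd(\EE)) \mid a \cup s = 0 \text{ in } H^1(X, \EE) \ \forall \, s \in H^0(X, \EE) \}, \]
which gives the claimed identification of $t_{\Def_{(\EE, H^0(\EE))}}$.

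The main obstacle I expect is the cochain-level verification that $\alpha$ really coincides with cup product against sections, that is, the careful check of the isomorphism $\coker h \cong \HH om(U \otimes \OO_X, \EE)$ and of the restriction-to-$\gamma$ map under the identifications $\gamma \cong U \otimes \OO_X$ and $\bigl((U\otimes\OO_X)\oplus\EE\bigr)/\gamma \cong \EE$. Once this compatibility is established, as it already is in the proof of Theorem \ref{thm. def (E,U) usando tot} and in Corollary \ref{prop.sezione si estende se cup product va a zero}, the remaining assertions follow immediately from exactness.
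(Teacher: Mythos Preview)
Your proposal is correct and follows exactly the approach implicit in the paper: the corollary is stated without proof as a direct consequence of the exact sequence \eqref{eq.theorem} specialised to $U=H^0(X,\EE)$, together with the identification of $\alpha$ with the cup-product map (cf.\ Corollary \ref{prop.sezione si estende se cup product va a zero} and the analogous computation in the proof of Theorem \ref{teorema tangente Def^r}). Your unpacking of why $\alpha$ is cup product via the description of $\coker h$ is a faithful expansion of what the paper leaves to the reader.
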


\begin{remark}
We  underline that the results of \cite{DE Brill-Noether} hold over the field of complex numbers $\CC$
without the additional hypothesis that $H^i(X,\OO_X)=0$  for $i=1,2$.
\end{remark}

\subsection{ An explicit model for the semicosimplicial dgla that controls deformations of $(\EE, U)$.}\label{sezione dgla esplicit model}   In this section, we describe another semicosimplicial dgLa whose total complex controls deformations of the pairs $(\EE,U)$ over a field $\kk$.   
The construction is more explicit and it avoids the Theorem on descent of Deligne groupoids (Theorem \ref{hinich theorem}).
 
In the previous notation, given the morphism of sheaves $s: U \otimes \OO_X \to \EE$, we consider the complex of sheaves
\[{\mathcal Q}: U\otimes \OO_X \stackrel{s}{\to} \EE, \]
 where the sheaves $U\otimes \OO_X$ and $\EE$ are settled in degree zero and one, respectively. As above, $s$ is the morphism associated to $i: U \hookrightarrow H^0(\EE)$ and it is a differential, $s^2=0$, since there is no degree 2. 
A similar construction is used in \cite[pag. 234-235]{Manetti.LMDT} to analyse the embedded deformations. Consider the following  complex of sheaves concentrated in degree 0 and 1: 
\[ \HH om^{\geq 0}({\mathcal Q},{\mathcal Q}) = \left\{ \begin{array}{cll} 
\HH om^0({\mathcal Q},{\mathcal Q}) &=&  \EE nd (U\otimes \OO_X) \oplus  \EE nd(\EE) \\
\HH om^1({\mathcal Q},{\mathcal Q}) &= & \HH om(U\otimes \OO_X, \EE) \\
\HH om^k({\mathcal Q},{\mathcal Q}) &= & 0 \quad \forall \ k \neq 0,1,
\end{array}     \right. \]
This is a sheaf of dgLas. The differential is given by $s$, the bracket is the usual one in $\EE nd (U\otimes \OO_X)$ and $  \EE nd(\EE)$, it is the trivial bracket between an element in  $\EE nd (U\otimes \OO_X)$  and an element in $  \EE nd(\EE)$ and it is given by the composition between elements in degree 0 and 1. 

Next, fix an open affine cover $\VV=\{  V_i \}_i$ of $X$ trivialising $\EE$ and let $\g^\Delta$ be the \v{C}ech semicosimplicial dgLa  $\g^\Delta$ associated to the sheaf $ \HH om^{\geq 0}({\mathcal Q},{\mathcal Q})$ and to the open cover $\VV$. 
The aim of this section is to prove the following result.

\begin{theorem} \label{teorema H1 e Def(E,U)}
The \v{C}ech semicosimplicial dgLa  $\g^\Delta$ associated to the sheaf $ \HH om^{\geq 0}({\mathcal Q},{\mathcal Q})$ controls the infinitesimal deformations of the morphism $s: U\otimes \OO_X \to \EE$, i.e., 
the functor $\widetilde{H}^1_{sc}(\exp \g^{\Delta})$ and the functor $\Def_{(U\otimes \OO_X, s, \EE)}$ are isomorphic. 
\end{theorem}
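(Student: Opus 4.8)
The plan is to unwind both sides explicitly over a fixed $A\in\Art_\kk$ and to exhibit mutually inverse equivalences of groupoids, natural in $A$. The crucial simplification is that the sheaf $\HH om^{\geq 0}(\mathcal{Q},\mathcal{Q})$, and hence every term $\g_n$ of the \v{C}ech semicosimplicial dgLa $\g^\Delta$, is concentrated in degrees $0$ and $1$; in particular each $\g_n$ is trivial in negative degrees, so $\widetilde{H}^1_{\rm sc}(\exp\g^\Delta)$ is a well-defined functor in groupoids, whose objects and morphisms are given by \eqref{def Z1 caso gradi non neg} and \eqref{def rel equiv caso gradi non neg}. First I would read off the data: since $\g_0^2=0$, the Maurer--Cartan equation $dl+\frac12[l,l]=0$ is automatically satisfied, so an element $l\in\g_0^1\otimes\m_A$ is simply a collection $l=(l_i)_i$ of local homomorphisms $l_i\in\Hom(U\otimes\OO_{V_i},\EE_{|V_i})\otimes\m_A$, while $m\in\g_1^0\otimes\m_A$ is a collection $m=(m_{ij})$ with $m_{ij}=(u_{ij},e_{ij})\in(\EE nd(U\otimes\OO_{V_{ij}})\oplus\EE nd(\EE_{|V_{ij}}))\otimes\m_A$.

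The key computational step is to identify the gauge action of $\exp(\g_1^0)$ with conjugation of the perturbed differential. Since a Maurer--Cartan element $l$ of $\HH om^{\geq 0}(\mathcal{Q},\mathcal{Q})$ corresponds to the perturbed differential $s+l$ of $\mathcal{Q}$, and a degree-zero element $(u,e)$ acts on $\mathcal{Q}$ by $u$ in degree $0$ and $e$ in degree $1$, I would verify the formula $e^{(u,e)}*l=e^{e}\circ(s+l)\circ e^{-u}-s$. Granting this, the three conditions of \eqref{def Z1 caso gradi non neg} acquire a transparent geometric meaning: the second condition $\partial_{1,1}l=e^m*\partial_{0,1}l$ becomes $s+l_i=e^{e_{ij}}\circ(s+l_j)\circ e^{-u_{ij}}$ on $V_{ij}$, i.e. the local deformed morphisms $s+l_i\colon(U\otimes\OO_{V_i})\otimes A\to\EE_{|V_i}\otimes A$ glue, through the transition data $e^{u_{ij}}$ and $e^{e_{ij}}$, into a global morphism of sheaves over $A$; and the third condition, the vanishing of the Baker--Campbell--Hausdorff product $\partial_{0,2}m\bullet-\partial_{1,2}m\bullet\partial_{2,2}m$ on triple intersections, is precisely the cocycle condition that makes $\{e^{e_{ij}}\}$ glue $\EE$ into a deformation $\EE_A$ and $\{e^{u_{ij}}\}$ glue $U\otimes\OO_X$ into a deformation of the source. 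Thus an object $(l,m)$ is exactly the datum of a deformation of $s$, that is, an object of $\Def_{(U\otimes\OO_X,s,\EE)}(A)$ in the sense of Definitions \ref{def.deform di morf di fasci} and \ref{def.funtore def in gruppoidi di alpha}.

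For morphisms, an element $a=(a_i)_i\in\g_0^0\otimes\m_A$ with $a_i=(a_i^U,a_i^\EE)$ is a collection of local automorphisms of the trivial deformations; the first equation of \eqref{def rel equiv caso gradi non neg}, namely $e^a*l_0=l_1$, reads $e^{a_i^\EE}\circ(s+l_{0,i})\circ e^{-a_i^U}=s+l_{1,i}$ and matches the local morphisms, while the second equation expresses compatibility of the $e^{a_i}$ with the two gluings. Together they assemble into a global pair of isomorphisms $(\psi,\phi)$ of the deformed source and target intertwining the deformed morphisms, that is, a morphism in $\Def_{(U\otimes\OO_X,s,\EE)}(A)$. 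Conversely, given a deformation of $s$, local triviality (on each affine $V_i$ both $\EE$ and $U\otimes\OO_X$ are free, hence their restricted deformations are trivial) lets me choose trivializations; in these, $s_A|_{V_i}$ takes the form $s+l_i$ and the transition functions between trivializations define $m=(u_{ij},e_{ij})$. A different choice of trivializations changes $(l,m)$ precisely by a morphism of the above type, so the construction descends to an inverse on isomorphism classes and, tracking the trivializations, to an inverse functor.

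I would finish by checking that both assignments are natural in $A$ and are mutually quasi-inverse equivalences of groupoids, whence the claimed isomorphism $\widetilde{H}^1_{\rm sc}(\exp\g^\Delta)\cong\Def_{(U\otimes\OO_X,s,\EE)}$. The main obstacle is the computational lemma identifying the abstract gauge action and the BCH products of \eqref{def Z1 caso gradi non neg}--\eqref{def rel equiv caso gradi non neg} with conjugation by, and the cocycle condition on, the transition functions; once this dictionary is in place the rest is a routine, if bookkeeping-heavy, verification, the only delicate point being the independence of the inverse construction from the chosen local trivializations, which is exactly absorbed by the groupoid morphisms.
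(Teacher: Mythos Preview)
Your proposal is correct and follows essentially the same route as the paper's own proof: both unpack the objects and morphisms of $\widetilde{H}^1_{\rm sc}(\exp\g^\Delta)(A)$, use the identification of the gauge action with conjugation of the perturbed differential $s+l$ (the paper writes this as $s+\partial_{1,1}l=e^\mu(s+\partial_{0,1}l)e^{-\nu}$, citing \cite[Formula 8]{DE Brill-Noether}, which is your computational lemma), read the three conditions of \eqref{def Z1 caso gradi non neg} as triviality of the Maurer--Cartan equation, gluing of the local maps $s+l_i$, and the cocycle condition on the transition data, and then construct the inverse via local triviality of deformations of free sheaves on the affine pieces. The only cosmetic difference is notation for the components of $m$ and the order of indices in the face maps.
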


\begin{proof}
For any $A \in \Art_\kk$, we define a 1-1 correspondence between the objects of the groupoids
\[ \widetilde{H}^1_{sc}(\exp \g^{\Delta})(A)\to \Def_{(U\otimes \OO_X, \EE, s)}(A). \]  

 An element in $Z^1_{sc}(\exp \g^{\Delta})(A)$ is a pair $(l,m) \in\left(\g_0^1 \oplus \g_1^0\right)\otimes \m_A$, where  the  element $l=\{l_i\}_i \in \prod _i \HH om(U\otimes \OO_X, \EE)(V_i) \otimes \m_A$ and the element 
$m=\{\nu_{ij}, \mu_{ij}\}_{ij} \in \prod _{i<j} \left(\EE nd(U\otimes \OO_X)(V_{ij})\oplus \EE nd(\EE)(V_{ij})\right) \otimes \m_A$, and it has to satisfy the equations in (\ref{def Z1 caso gradi non neg}).

Using these data, we can define a  deformation of the morphism  $s: U\otimes \OO_X  \to \EE$ over $A$ as follows.
\begin{itemize}
\item The deformation $\EE_A$ of the locally free sheaf $\EE$ over $A$ is obtained by gluing the local trivial deformations $\{\EE|_{V_i}\otimes A\}_i$  by the isomorphisms  $\{e^{ \mu_{ij}}: \EE_{|_{V_{ij}}}\otimes A \to \EE_{|_{V_{ij}}}\otimes A\}_{ij}$. 
Note that the third equation of (\ref{def Z1 caso gradi non neg}) gives rise in our case to the equation ${\partial_{0,2} \mu} \bullet {-\partial_{1,2} \mu} \bullet
{\partial_{2,2} \mu} = 0$, that assures that the given gluing  functions are compatible on the triple intersections. 
\item Similarly, the data $\{e^{\nu_{ij}}: U\otimes \OO_X|_{V_{ij}} \otimes A \to U\otimes \OO_X|_{V_{ij}} \otimes A\}_{ij}$ are gluing isomorphisms of the local trivial deformations of the sheaf $U\otimes \OO_X$ over $A$ in the open sets ${V_{ij}}$. The third equation in  (\ref{def Z1 caso gradi non neg})  gives rise to the equation  ${\partial_{0,2}\nu} \bullet {-\partial_{1,2}\nu} \bullet
{\partial_{2,2}\nu} = 0$ and it assures that  the gluing isomorphisms are compatible on the triple intersections. 
\item  The morphism $s_A: U\otimes \OO_X \otimes A  \to \EE_A$ is locally defined as $\{s|_{V_i} + l_i: U\otimes \OO_X|_{V_{i}}\otimes A \to \EE|_{V_i} \otimes A\}_i$. Note that the second equation  of  (\ref{def Z1 caso gradi non neg})  is $\delta_{11} l = e^m * \delta_{01} l$  and it can be rewritten as $s + \delta_{11} l = e^\mu (s + \delta_{01} l ) e^{-\nu}$ (see for example \cite[Formula 8]{DE Brill-Noether}).  It says that the local maps $\{s|_{V_i} + l_i\}_i$ glue to a global one.
\end{itemize}
Note that the first equation $dl + \frac{1}{2}[l,l]=0$ is trivial in our case, because our dgLas are zero in degree $2$. 

Thus every element in  $Z^1_{sc}(\exp \g^{\Delta})(A)$ exactly defines an infinitesimal deformation of the morphism $s:U\otimes \OO_X \to \EE$ on $A$. Let now $(l_0,m_0), (l_1,m_1) \in  Z^1_{sc}(\exp \g^{\Delta})(A)$ be isomorphic elements. We prove that  they define two isomorphic deformations in $\Def_{(U\otimes \OO_X, s,\EE)}(A)$.
\begin{itemize}
\item By the first equation in  (\ref{def rel equiv caso gradi non neg}), there exists $a \in \g_0^0 \otimes \m_A$ such that $e^a *l_0=l_1$. Note that  
$a =(\beta,\alpha )= \{ \beta_i, \alpha_i \}_i   \in \prod_i \left( \EE nd(U\otimes \OO_X)(V_i) \oplus \EE nd(\EE)(V_i) \right) \otimes \m_A$ 
and as usual, the equation can be rewritten as $s +l_1 =e^\alpha (s +l_0) e^{-\beta} $. Then,  for every $i$,   $e^{\alpha_i}: \EE_{|_{V_i}} \otimes A\to \EE_{|_{V_i}} \otimes A $ and $e^{\beta_i}: U\otimes {\OO_X}|_{V_i} \otimes A \to U\otimes {\OO_X}|_{V_i} \otimes A$ define local isomorphisms that make the following diagram commutative: 
\[
\xymatrix{ U\otimes \OO_X|_{V_i} \otimes A \ar[d]^{e^{\beta_i}}   \ar[r]^-{s+ (l_0)_i}& \EE|_{V_i}\otimes A \ar[d]^{e^{\alpha_i}} \\
             U\otimes \OO_X|_{V_i} \otimes A \ar[r]^-{s + (l_1)_i}&  \EE|_{V_i}\otimes A . }
\]

\item The second equation  in (\ref{def rel equiv caso gradi non neg}) gives rise to the following two equations $ - \mu_0\bullet -\partial_{1,1}\alpha \bullet \mu_1
\bullet \partial_{0,1}\alpha=0$ and $- \nu_0\bullet -\partial_{1,1}\beta \bullet \nu_1
\bullet \partial_{0,1}\beta=0$ that assure that the isomorphisms  $e^{\alpha_i}$ and  $e^{\beta_i}$ glue to a global one. 
\end{itemize}
Thus, as wanted, the two deformations defined by $(l_0,m_0), (l_1,m_1)$ are isomorphic. 

In this way, we obtain every infinitesimal  deformation of the morphism $s:U\otimes \OO_X \to \EE$. Indeed, on a fix open affine set $V_i$,  the infinitesimal deformations of $\EE$ and $U\otimes \OO_X$ are trivial, while a deformation of $s:U \otimes \OO_X \to \EE$ over $V_i$, is given by $s |_{V_i} + l_i: U \otimes \OO_X|_{V_{i}}\otimes A \to \EE|_{V_i}\otimes A $, for $l_i \in  \HH om(U\otimes \OO_X, \EE)(V_i) \otimes \m_A$.
Then, we have to glue these data along double intersections to give a deformation of the morphism $s$ on $X$.  Therefore, we need isomorphisms $ \tilde{\mu}_{ij}: \EE_{|_{V_{ij}}}\otimes A \to \EE_{|_{V_{ij}}}\otimes A $ for all $i, j$, that satisfy the cocycle condition on triple intersections: $ \tilde{\mu}_{jk}\circ  {\tilde{\mu}_{ik}}^{-1}\circ  \tilde{\mu}_{ij}=\Id$, to glue the trivial deformations of $\EE$ to a global one.  Moreover, we need  isomorphisms $ \tilde{\nu}_{ij}: U\otimes \OO_X|_{V_{ij}}\otimes A \to U\otimes \OO_X|_{V_{ij}}\otimes A $ such that the following diagram commutes
\[
\xymatrix{ U\otimes \OO_X|_{V_i} \otimes A \ar[d]_{\tilde{\nu}_{ij}}   \ar[rr]^-{{( s |_{V_i} + l_i)}_{|_{V_{ij}}}}& & \EE|_{V_i}\otimes A \ar[d]^{ \tilde{\mu}_{ij}} \\
           U\otimes \OO_X|_{V_j} \otimes A   \ar[rr]_ -{{( s |_{V_j} + l_j)}_{|_{V_{ij}}}}& & \EE|_{V_j}\otimes A   , }
\]
i.e., $  {( s |_{V_j} + l_j)}_{|_{V_{ij}}} \circ \tilde{\nu}_{ij}  =  \tilde{\mu}_{ij}\circ {( s |_{V_i} + l_i)}_{|_{V_{ij}}}.$  Then, the local trivial deformations will glue to a global non trivial deformation of $s$.  
 
Since we are in characteristic zero, we can take the logarithm to obtain $(\tilde{\mu}_{ij}, \tilde{\nu}_{ij} )= (e^{ \mu_{ij}}, e^{ \nu_{ij}})$ with 
 $\{(\nu_{ij}, \mu_{ij})\}_{ij} \in\prod _{i<j}  \left(  \EE nd( U\otimes \OO_X)(V_{ij})\oplus \EE nd( \EE)(V_{ij})\right) \otimes \m_A$.

Therefore, any deformation of the morphism $s$ over $A$ is given by an element $(l,m) \in\left(\g_0^1 \oplus \g_1^0\right)\otimes \m_A$, where $l=\{l_i\}_i \in  \prod _{i}\HH om(U\otimes \OO_X, \EE)(V_i) \otimes \m_A$ and the element 
$m=\{\nu_{ij}, \mu_{ij}\}_{ij} \in \prod _{i<j}  \left(   \EE nd(U\otimes \OO_X) (V_{ij})\oplus \EE nd( \EE)(V_{ij})\right) \otimes \m_A$,
that has to satisfy the conditions in (\ref{def Z1 caso gradi non neg}).

Analogously, we can prove that isomorphisms of deformations correspond to the existence of an element $a \in \g^0_0 \otimes \m_A $ that satisfy the condition in Equation  \eqref{def rel equiv caso gradi non neg}. 
\end{proof}

As a direct consequence of Remark \ref{rmk.equivalenza def coppia e morfismo}, Theorems \ref{teorema H1 e Def(E,U)} and  \ref{teo.H1sc def functors}, we get the following result.

\begin{corollary}
Under the hypothesis $H^1(X, \OO_X)=0$,  there are  natural isomorphisms of functors
\[
 \Def_{(\EE,U)}   \cong  \Def_{(U\otimes \OO_X, s, \EE)} \cong H^1_{sc}(\exp \g^\Delta) \cong \Def_{\Tot(\g^{\Delta})}.
\]
\end{corollary}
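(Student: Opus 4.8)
The plan is to obtain the displayed chain by composing the three results just cited and verifying, at each link, that the hypotheses are in force and that the functors being compared actually match; no new computation is required. I would work through the chain from left to right.

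The first isomorphism $\Def_{(\EE,U)} \cong \Def_{(U\otimes \OO_X, s, \EE)}$ is precisely Remark \ref{rmk.equivalenza def coppia e morfismo}, for which the standing assumption $H^1(X,\OO_X)=0$ is exactly what is needed: it forces every infinitesimal deformation of the domain $U\otimes \OO_X$ to be trivial, so that the data of a deformation of the pair $(\EE,U)$ and the data of a deformation of the morphism $s$ coincide, both on objects and on isomorphisms, naturally in $A\in\Art_\kk$. The middle isomorphism comes from Theorem \ref{teorema H1 e Def(E,U)}, which produces a natural equivalence $\widetilde{H}^1_{sc}(\exp\g^\Delta)\cong\Def_{(U\otimes \OO_X, s, \EE)}$ of functors in groupoids; to land on the set-valued functor $H^1_{sc}(\exp\g^\Delta)$ of the statement I would pass to isomorphism classes, using that $H^1_{sc}(\exp\g^\Delta)=\pi_0(\widetilde{H}^1_{sc}(\exp\g^\Delta))$ by the remark following Definition \ref{def.h1sc}. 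Finally, the last isomorphism $H^1_{sc}(\exp\g^\Delta)\cong\Def_{\Tot(\g^\Delta)}$ is a direct application of Theorem \ref{teo.H1sc def functors}.

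The only thing that genuinely has to be checked is the hypothesis of Theorem \ref{teo.H1sc def functors}, namely $H^j(\g^i)=0$ for all $i\ge 0$ and all $j<0$, together with the $\pi_0$ bookkeeping that reconciles the groupoid-valued functors at the left of the chain with the set-valued functors at the right. Both are immediate from the shape of $\g^\Delta$: since the sheaf $\HH om^{\geq 0}({\mathcal Q},{\mathcal Q})$ lives only in degrees $0$ and $1$, each term $\g_i$ of the \v{C}ech semicosimplicial dgLa is a dgLa concentrated in non-negative degrees, so its cohomology vanishes in negative degrees, which simultaneously validates Theorem \ref{teo.H1sc def functors} and makes $\widetilde{H}^1_{sc}$ a well-defined groupoid enhancement of $H^1_{sc}$. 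Thus the expected obstacle is not really an obstacle: once the non-negativity of $\g^\Delta$ is recorded, the corollary is the concatenation of the three equivalences, understood at the level of $\pi_0$ where groupoids meet sets.
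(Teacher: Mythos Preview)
Your proposal is correct and follows exactly the route the paper takes: the corollary is stated there as a direct consequence of Remark \ref{rmk.equivalenza def coppia e morfismo}, Theorem \ref{teorema H1 e Def(E,U)}, and Theorem \ref{teo.H1sc def functors}, with no further argument. Your added verification that each $\g_i$ is concentrated in non-negative degrees (so that Theorem \ref{teo.H1sc def functors} applies and the groupoid/set bookkeeping via $\pi_0$ is consistent) is the only point the paper leaves implicit, and you handle it correctly.
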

This corollary provides another explicit description of a dgLa controlling the infinitesimal deformations of the pair $(\EE,U)$.

\bigskip

Fix an open affine  cover $\{ V_i \}_{i}$ of $X$. As above, let $\g^\Delta$ and $\h^\Delta$ be the \v{C}ech semicosimplicial  dgLas associated to the sheaf   $\HH om^{\geq 0}({\mathcal Q}, {\mathcal Q})$ and to the sheaf $\EE nd(\EE)$, respectively. There is an obvious surjective morphism of semicosimplicial dgLas
$ \g^\Delta \to \h^\Delta$ and, denoting with $\m^\Delta$ the kernel of it, we get the following exact sequence of semicosimplicial dgLas:
\begin{equation} \label{ex.seq.} 0 \to \m^\Delta \to \g^\Delta \to \h^\Delta \to 0. \end{equation}

\begin{lemma} \label{prop.cohomology of mdelta} 
Under the hypothesis that  $H^i(X,\OO_X)=0$ for $i=1,2$, 
the cohomology of the total complex of the semicosimplicial dgLa $\m^\Delta$ is given by: 
\[ H^0\left(\Tot(\m^\Delta)\right)=0,  H^1\left(\Tot(\m^\Delta)\right) = \Hom\left(U, H^0(\EE)/U\right) \ \mbox{and}\ H^2\left(\Tot(\m^\Delta)\right) = \Hom(U, H^1(\EE)). \]
\end{lemma}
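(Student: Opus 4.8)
The plan is to identify $\m^\Delta$ with the \v{C}ech semicosimplicial dgLa of an explicit two-term complex of sheaves, and then to compute the cohomology of its total complex as a hypercohomology by means of a spectral sequence.

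First I would make the kernel explicit. The surjection $\g^\Delta \to \h^\Delta$ is the levelwise restriction of the morphism of sheaves of dgLas $\HH om^{\geq 0}(\mathcal{Q},\mathcal{Q}) \to \EE nd(\EE)$ that projects the degree-zero part $\EE nd(U\otimes \OO_X)\oplus \EE nd(\EE)$ onto the second summand and annihilates the degree-one part. Hence its kernel is the sheaf of dgLas $\m$, concentrated in degrees $0$ and $1$, with
\[ \m^0 = \EE nd(U\otimes \OO_X), \qquad \m^1 = \HH om(U\otimes \OO_X, \EE), \]
and differential $\phi \mapsto s\circ \phi$. Using the canonical isomorphisms $\EE nd(U\otimes \OO_X)\cong U^\vee\otimes(U\otimes \OO_X)$ and $\HH om(U\otimes \OO_X,\EE)\cong U^\vee\otimes \EE$, this complex is identified with $U^\vee\otimes \mathcal{Q}$, where $\mathcal{Q}\colon U\otimes \OO_X \xrightarrow{s} \EE$ is the two-term complex introduced above. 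Since $\m^\Delta$ is the \v{C}ech semicosimplicial dgLa of the complex $\m$ for the affine cover $\VV$, the cohomology of its total complex is the \v{C}ech hypercohomology of $\m$, which (the cover being affine and the sheaves coherent) equals the hypercohomology on $X$; thus
\[ H^k(\Tot(\m^\Delta)) \cong \mathbb{H}^k(X,\m) \cong U^\vee\otimes \mathbb{H}^k(X,\mathcal{Q}), \]
the last step because $U^\vee$ is a finite-dimensional vector space.

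Now $\mathbb{H}^k(X,\mathcal{Q})$ for $k=0,1,2$ I would compute with the hypercohomology spectral sequence $E_1^{p,q}=H^q(X,\mathcal{Q}^p)\Rightarrow \mathbb{H}^{p+q}(X,\mathcal{Q})$, whose only possibly nonzero columns are $p=0$, with $E_1^{0,q}=U\otimes H^q(X,\OO_X)$, and $p=1$, with $E_1^{1,q}=H^q(X,\EE)$; the differential $d_1\colon E_1^{0,q}\to E_1^{1,q}$ is induced by $s$. The hypotheses $H^1(X,\OO_X)=H^2(X,\OO_X)=0$ annihilate $E_1^{0,q}$ for $q=1,2$, so in total degree $\leq 2$ the surviving entries are $E_1^{0,0}=U$, $E_1^{1,0}=H^0(X,\EE)$, $E_1^{1,1}=H^1(X,\EE)$ and $E_1^{1,2}=H^2(X,\EE)$. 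The only nontrivial $d_1$ is then $d_1\colon U\to H^0(X,\EE)$, which coincides with the inclusion $i$ (on $H^0$ the morphism $s$ sends $U$ to its sections); in particular it is injective.

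Taking cohomology yields $E_2^{0,0}=\ker i=0$, $E_2^{1,0}=H^0(X,\EE)/U$, $E_2^{1,1}=H^1(X,\EE)$ and $E_2^{1,2}=H^2(X,\EE)$. As the $E_2$-page has only the two columns $p=0,1$, every differential $d_r$ with $r\geq 2$ must vanish and the spectral sequence degenerates; reading the diagonals gives $\mathbb{H}^0(X,\mathcal{Q})=0$, $\mathbb{H}^1(X,\mathcal{Q})=H^0(X,\EE)/U$ and $\mathbb{H}^2(X,\mathcal{Q})=H^1(X,\EE)$. Tensoring with $U^\vee$ and rewriting $U^\vee\otimes V=\Hom(U,V)$ produces the three asserted groups. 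The main work lies in the first part: pinning down the kernel complex $\m$ and, in particular, recognizing that the induced $d_1$ is exactly the inclusion $i$ while the vanishing of $H^i(X,\OO_X)$ collapses the $p=0$ column; once these are in place the spectral-sequence computation is automatic.
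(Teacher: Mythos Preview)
Your proof is correct and follows essentially the same strategy as the paper: both compute the cohomology of $\Tot(\m^\Delta)$ via the spectral sequence of the associated double complex, taking \v{C}ech/sheaf cohomology first and then the internal differential, and both use the two-term nature of the complex together with the vanishing of $H^i(X,\OO_X)$ for $i=1,2$ to force degeneration at $E_2$. Your preliminary step of factoring $\m\cong U^\vee\otimes\mathcal{Q}$ and reducing to $\mathbb{H}^\ast(X,\mathcal{Q})$ is a mild cosmetic simplification, but the substance of the argument is the same as the paper's.
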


\begin{proof}
The double complex associated to the semicosimplicial dgLa $\m^\Delta$ is
\begin{equation}  \label{doppio complesso}
\xymatrix@C=0.8em{
 {\prod_i\HH om(U\! \otimes \! \OO_X \!, \EE) (V_i) } \ar[r]^-{{\check{\delta}}} & {\prod_{i<j} \HH om(U\! \otimes \! \OO_X\!, \EE) (V_{ij}) } \ar[r]^-{\check{\delta}} & {\prod_{i<j<k} \HH om(U\! \otimes \! \OO_X\!, \EE) (V_{ijk}) }\ar[r]^-{\check{\delta}}  & \ldots \\
{\prod_i  \EE nd(U\! \otimes \! \OO_X)  (V_i) } \ar[r]^-{\check{\delta}}\ar[u]^{d}  & {\prod_{i<j}   \EE nd(U\! \otimes \! \OO_X) (V_{ij}) }\ar[r]^-{\check{\delta}} \ar[u]^{d} & {\prod_{i<j<k}  \EE nd(U\! \otimes \! \OO_X)  (V_{ijk}) }\ar[r]^-{\check{\delta}} \ar[u]^{d} & \ldots   
}   \end{equation}
where the horizontal differential $\check{\delta}$ is the \v{Cech} one, while the vertical $d$ is the differential of the dgLa involved. 

Let $\{E_k^{p,q}\}$ be the spectral sequence associated to the double complex (\ref{doppio complesso}). Calculating the \v{C}ech cohomology of it, we get the first page:
\begin{equation}  \label{prima pagina} 
\xymatrix@C=0.3em{ 
{\Hom(U, H^0(X,\EE)) } & { \Hom (U, \check{H}^1(X,\EE)) }  &  { \Hom (U, \check{H}^2(X,\EE)) }& \ldots &  { \Hom (U, \check{H}^k(X,\EE))}  \\
{ \Hom(U, U) }\ar[u]^{d}  & {0 } \ar[u]^{d} & {0}\ar[u]^{d}  &\ldots& H^k(  X , \OO_X)\otimes \Hom(U,U) \ar[u]^{d}   \\  }   \end{equation}
where we use that  $\check{H}^0( X,  \EE nd(U\otimes \OO_X))= \Hom (U,U)$ and  $ \check{H}^k( X,  \EE nd(U\otimes \OO_X))= 0$ for $k=1,2$ and $\check{H}^k(X,\HH om(U\otimes \OO_X, \EE))= \Hom (U, \check{H}^k(X,\EE))$ for all $k\geq 0$. 
Since the differential from the second page is always zero, the spectral sequence abuts to $E_2^{p,q}=E_{\infty}^{p,q}$.
This gives the expected cohomology of the total complex $\Tot(\m^\Delta)$. 
\end{proof}

  From the exact sequence \eqref{ex.seq.} and thanks to Lemma \ref{prop.cohomology of mdelta}, we recover  the  exact sequence \eqref{eq.theorem}:
\begin{equation} 
\begin{split} \label{ex seq cohomology}
0 \to H^0(\Tot(\g^\Delta)) \to H^0( X,\EE nd (\EE))  
 \to \Hom (U, H^0(X,\EE)/U  )\\  \to H^1(\Tot(\g^\Delta)) \to H^1(X,\EE nd (\EE))   \stackrel{\alpha}{\to}  
   \Hom (U, H^{1}(X,\EE))  \stackrel{\beta}{\to}  \\  
   \to H^2(\Tot(\g^\Delta))  \stackrel{\gamma}{\to}  H^2(X,\EE nd (\EE)) \to \Hom (U, H^{2}(X,\EE)). \\
 \end{split}
\end{equation}
 Thus, the two dgLas $\Tot(H({\mathcal V})^\Delta)$ and $\Tot(\g^\Delta)$, constructed to control the infinitesimal deformations of the pair $(\EE, U)$, have  actually   the same tangent and obstructions space.

\begin{remark}
Note that, if we strengthen the hypothesis of Theorem \ref{thm. def (E,U) usando tot}  and of Lemma \ref{prop.cohomology of mdelta}, assumining that $H^i(X, \OO_X)=0$ for all $i>0$, both the exact sequences \eqref{eq.theorem} and \eqref{ex seq cohomology} continue in higher degrees giving rise to the exact sequences: 
\[
\ldots \to H^i(\Tot(\g^\Delta)) \to H^i( X,\EE nd (\EE))  
 \to \Hom (U, H^i(X,\EE)  ) \to H^{i+1}(\Tot(\g^\Delta)) \to \ldots 
\]
and \[
\ldots \to H^i(\Tot(H({\mathcal V})^\Delta) \to H^i( X,\EE nd (\EE))  
 \to \Hom (U, H^i(X,\EE)  ) \to H^{i+1}(\Tot(H({\mathcal V})^\Delta)) \to \ldots 
\] 
respectively. Thus, in this case the two dgLas $\Tot(H({\mathcal V})^\Delta)$ and $\Tot(\g^\Delta)$ are quasi-isomorphic.
\end{remark}

\section{Deformations of a locally free sheaf preserving some sections}\label{sezione finale defo con sezioni}

In this section, we consider a locally free sheaf $\EE$ of $\OO_X$-modules on a smooth projective variety $X$ over $\kk$, such that $\dim H^0(X,\EE) \geq k$. We analyse the infinitesimal deformations of $\EE$ such that at least $k$ independent sections of $\EE$ lift to the deformed locally free sheaf.  Our approach is the same as in \cite{DE Brill-Noether} improved with the techniques developed   in the previous sections that allow to generalise the results to   any algebraically closed field $\kk$ of characteristic zero.
 
We start giving the following definition.
\begin{definition}
Let $\EE$ be a locally free sheaf of $\OO_X$-modules on a smooth projective variety $X$ over $\kk$, such that $h^0(X,\EE) \geq  k$.
Let $Gr(k,H^0(X, \EE))$ be the Grassmannian of all subspaces of $H^0(X, \EE)$ of dimension $k$. We define the functor $\Def^{k}_\EE:\Art_\kk \to \Set$, that associates to every $A \in \Art_\kk$  the set
\[ \Def^{k}_\EE (A)= \bigcup_{U \in Gr(k,H^0(X, \EE))} r_U(\Def_{(\EE,U)}(A)),
\]
where $r_U: \Def_{(\EE,U) }\to \Def_\EE$ is the forgetful maps of functors. 
We call it the  functor of infinitesimal deformations  of $\EE$  with at least $k$ sections.
\end{definition}

As observed in  loc. cit., the functor $\Def^{k}_\EE $ is a functor of Artin rings, but unfortunately, it is not a deformation functor. This makes the  analysis of it more difficult: its first order deformations do not necessary form a vector space, and its obstructions do not have a clear meaning in term of the corresponding moduli space.

Here we get the following result, that is a version of \cite[Theorem 5.3]{DE Brill-Noether} over any algebraically closed field  $\mathbb{K}$ of characteristic zero.

\begin{theorem}\label{teorema tangente Def^r}
Let $X$ be a smooth projective variety over
an algebraically closed field  $\mathbb{K}$ of characteristic zero, such that $H^i(X,\OO_X)=0$ for $i=1,2$.
If $h^0(X, \EE)=k$, then the tangent space to the deformation functor $\Def_\EE^k$ is 
\[  t_{\Def^k_\EE} = \Def_\EE^k(\kk[\epsilon]) = \{ a \in H^1(X,\EE nd (\EE)) \ \mid \ a \cup s =0, \ \forall  \ s \in H^0(X,\EE) \}. \]  
If instead $h^0(X, \EE) \geq k+1$,  then 
the first order deformations of $\EE$ with at least $k$ sections are described by the cone
\[ \Def_{\EE}^k(\kk[\epsilon]) = \{\nu \in H^1(X, \EE nd (\EE)) \mid \exists U \in {Gr(k,H^0(X, \EE))}  \mbox{ such that } \nu \cup s =0, \forall s \in U\} \]
and the vector space generated by it, that we call the tangent space to $\Def_\EE^k$, is 
\[ t_{\Def_{\EE}^k} = H^1(X, \EE nd (\EE)). \]
\end{theorem}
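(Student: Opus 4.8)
The plan is to reduce the statement to a description of the first-order deformations as a determinantal cone inside $H^1(X,\EE nd(\EE))$, and then to analyse its linear span. Throughout I write $n=h^0(X,\EE)$ and introduce the cup-product map
\[ \mu\colon H^1(X,\EE nd(\EE))\longrightarrow \Hom\bigl(H^0(X,\EE),H^1(X,\EE)\bigr),\qquad \mu(a)(s)=a\cup s, \]
which is exactly the arrow $\alpha$ occurring in the exact sequence \eqref{eq.theorem}.

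First I would identify $\Def^k_\EE(\kk[\epsilon])$ with the cone. Fix $U\in Gr(k,H^0(X,\EE))$. By Theorem \ref{thm. def (E,U) usando tot} one has $t_{\Def_{(\EE,U)}}=H^1(\Tot(H(\VV)^\Delta))$, and the sequence \eqref{eq.theorem} has at $H^1(X,\EE nd(\EE))$ the incoming arrow equal to the tangent map of the forgetful functor $r_U\colon \Def_{(\EE,U)}\to\Def_\EE$, followed by $\alpha\colon a\mapsto \mu(a)|_U$. Exactness therefore gives $\operatorname{im}(r_U)=\ker\alpha=\{a\mid a\cup s=0\ \forall s\in U\}$, which agrees with Corollary \ref{prop.sezione si estende se cup product va a zero}: $a$ lies in this image precisely when every section in $U$ extends over $\kk[\epsilon]$. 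Taking the union over all $U$ then yields $\Def^k_\EE(\kk[\epsilon])=\{a\mid \dim\ker\mu(a)\ge k\}$, which is the stated cone.

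For $h^0(X,\EE)=k$ the Grassmannian is the single point $U=H^0(X,\EE)$, so the cone equals the linear subspace $\{a\mid a\cup s=0\ \forall s\in H^0(X,\EE)\}$; being linear, it is its own span and coincides with $t_{\Def_{(\EE,H^0(\EE))}}$ by Corollary \ref{Cor. TDef(E,H0(E))}. This settles the first case.

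The substantial case is $h^0(X,\EE)\ge k+1$, where one must show that the cone spans all of $H^1(X,\EE nd(\EE))$. Since $\ker\mu$ is contained in the cone, the reduction is to prove that the elements of $V:=\operatorname{im}\mu$ of rank at most $n-k$ span $V$ (note $n-k\ge1$). To produce low-rank elements I would use the two multiplication maps into $H^1(X,\EE nd(\EE))=H^1(X,\EE\otimes\EE^\vee)$: the classes in the image of $H^0(X,\EE^\vee)\otimes H^1(X,\EE)$ give $\mu(a)$ of rank $\le1$ (of the shape $s\mapsto\langle t,s\rangle\,b$) and hence lie in the cone, while the hypothesis $H^1(X,\OO_X)=0$ forces the classes coming from $H^1(X,\EE^\vee)\otimes H^0(X,\EE)$ into $\ker\mu$. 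The hard part is precisely to upgrade these into a full spanning set for $V$: this is the determinantal input of \cite{DE Brill-Noether} (Theorem 5.3), and I expect it — rather than the formal reductions above — to be the genuine obstacle, amounting to showing that the rank-$\le(n-k)$ locus of the cup-product image is not contained in any hyperplane of $V$.
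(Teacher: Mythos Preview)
Your identification of the cone via the exact sequence \eqref{eq.theorem} and your treatment of the case $h^0(X,\EE)=k$ through Corollary \ref{Cor. TDef(E,H0(E))} coincide with the paper's proof essentially verbatim.

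The only divergence is in the final step. The paper makes no attempt at a K\"unneth-type construction; it simply invokes \cite[Proposition~4.2]{ACGH} as a ``classical linear algebra argument'' and stops there. Your proposed route through the images of $H^0(X,\EE^\vee)\otimes H^1(X,\EE)$ and $H^1(X,\EE^\vee)\otimes H^0(X,\EE)$ in $H^1(X,\EE nd(\EE))$ is not merely incomplete but a genuine dead end: these decomposable classes can vanish identically while the cone still spans. Take $X=\mathbb{P}^1$ and $\EE=\OO(1)\oplus\OO(-3)$, so that the hypotheses $H^i(X,\OO_X)=0$ hold and $n=h^0(\EE)=2$. Here $H^1(\EE nd(\EE))=H^1(\OO(-4))\cong\kk^3$, but the only nonzero summands of $H^0(\EE^\vee)$ and $H^1(\EE)$ are $H^0(\OO(3))$ and $H^1(\OO(-3))$ respectively, whose cup product lands in $H^1(\OO)=0$; moreover $H^1(\EE^\vee)=0$ outright. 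Thus both of your multiplication maps are the zero map. Nevertheless the cone (for $k=1$) does span: via Serre duality the map $\mu$ identifies $H^1(\OO(-4))$ with the space of symmetric bilinear forms on $H^0(\OO(1))\cong\kk^2$, and the rank-$\le 1$ symmetric forms (the perfect squares) span $\operatorname{Sym}^2$. So the spanning phenomenon has nothing to do with decomposable cohomology classes, and your proposed ``upgrade'' cannot be made to work; the determinantal input the paper cites from ACGH is of a different nature and you should appeal to it directly rather than through the K\"unneth picture.
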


\begin{proof}
In the case $h^0(X, \EE)=k$, the functor  $\Def^k_\EE$  is in one-to-on e correspondence with the functor $\Def_{(\EE,H^0(\EE))}$ and the tangent space is described in Corollary \ref{Cor. TDef(E,H0(E))}  as
\[t_{\Def^k_\EE}\cong t_{\Def_{(\EE,H^0(\EE))}} = \{ a \in H^1(X,\EE nd (\EE)) \ \mid \ a \cup s =0, \ \forall  \ s \in H^0(X,\EE) \}. \]  
If $h^0(X, \EE) \geq k+1$, by definition, 
\[\Def_\EE^k(\kk[\epsilon]) = \bigcup_{U \in Gr(k,H^0(X, \EE))} r_U(\Def_{(\EE,U)}(\kk[\epsilon])).\]
For each $U \in  Gr(k,H^0(X, \EE))$, we calculate the image  via $r_U$  of the tangent space to   the infinitesimal deformations of the pair $(\EE,U)$ using the exact sequence
\eqref{eq.theorem} 
\[ \cdots \to H^1(\Tot(H(\VV)^\Delta)) \stackrel{r_U}{\to} H^1(X,\EE nd (\EE))    \stackrel{\alpha}{\to}  \Hom (U, H^{1}(X,\EE)) \cdots . \]
Thus
\[  r_U(\Def_{(\EE,U)}(\kk[\epsilon]))= \ker \alpha = \{ \nu \in H^1(X, \EE nd (\EE)) \mid \nu \cup s =0, \forall s \in U\}  \]
and the first statement is proved. 

The second statement follows from a classical linear algebra argument, that can be found for example in \cite[Proposition 4.2]{ACGH}.  
\end{proof}

\begin{remark} 
We notice that our explicit description of the tangent space is also a particular case of the one of the Zariski tangent space to the cohomology jump functors done in \cite[Proposition 2.7]{Budur} using dgl pairs.
\end{remark}

Finally, we prove a new version of \cite[Propositions 5.5-5.6]{DE Brill-Noether} that concerns the smoothness of the functor $\Def_\EE^k$.

\begin{proposition} \label{prop.alpha sur equivalenza di liscezze}
Let $X$ be a smooth projective variety over
an algebraically closed field  $\mathbb{K}$ of characteristic zero, such that $H^i(X,\OO_X)=0$ for $i=1,2$.
  If there exists an $U \in  Gr(k,H^0(X, \EE))$ such that $\Hom(U, H^1(X, \EE))=0$ or, in an equivalent way, such that the map $\alpha:H^1(X, \EE nd (\EE)) \to \Hom(U, H^1(X,\EE)) $ in sequence \eqref{eq.theorem} is surjective, then
\[ \Def_\EE \mbox{ is smooth } \Leftrightarrow \Def_{(\EE,U)} \mbox{ is smooth } \Leftrightarrow \Def^k_\EE \mbox{ is smooth. } \]
\end{proposition}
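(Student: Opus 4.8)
The plan is to funnel both equivalences through a single observation about the forgetful morphism $r_U\colon \Def_{(\EE,U)}\to\Def_\EE$, which Corollary \ref{corollario alpha su implica r liscio} guarantees is smooth exactly under the present hypothesis on $U$. I work throughout with the set-valued deformation functors (the $\pi_0$ of the groupoids), for which $\Def_\EE(\kk)$, $\Def_{(\EE,U)}(\kk)$ and $\Def^k_\EE(\kk)$ are singletons; this matches the setting in which Corollary \ref{corollario alpha su implica r liscio} is stated, so no obstruction-theoretic recomputation is needed.

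The key step is to promote smoothness of $r_U$ to honest surjectivity on $A$-points. For any morphism $\phi\colon F\to G$ of functors of Artin rings with $F(\kk)=G(\kk)=\{\ast\}$, smoothness forces $F(A)\to G(A)$ to be surjective for every $A\in\Art_\kk$: one inducts on the length of $A$, and for a small extension $A\twoheadrightarrow A'$ one lifts a given $\xi\in G(A)$ by first lifting its restriction to some $\eta'\in F(A')$ (inductive hypothesis) and then using the defining surjectivity of $F(A)\to F(A')\times_{G(A')}G(A)$ on the compatible pair $(\eta',\xi)$. Applying this to $r_U$ yields $r_U(\Def_{(\EE,U)}(A))=\Def_\EE(A)$ for all $A$; concretely, the $k$ sections spanning $U$ lift along every deformation of $\EE$. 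Since $\Def^k_\EE(A)=\bigcup_{U'}r_{U'}(\Def_{(\EE,U')}(A))$ is sandwiched between the single term $r_U(\Def_{(\EE,U)}(A))=\Def_\EE(A)$ and $\Def_\EE(A)$ itself, I conclude $\Def^k_\EE=\Def_\EE$ as functors. This identity makes the equivalence ``$\Def_\EE$ smooth $\iff\Def^k_\EE$ smooth'' automatic.

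It remains to establish ``$\Def_{(\EE,U)}$ smooth $\iff\Def_\EE$ smooth'', and here I would invoke the standard fact that for a smooth morphism $\phi\colon F\to G$ the source is smooth if and only if the target is: the direction $G$ smooth $\Rightarrow F$ smooth is the composition property of smooth morphisms (lift a given element of $F(A)$ to $G(B)$ by smoothness of $G$, then to $F(B)$ by smoothness of $\phi$), while $F$ smooth $\Rightarrow G$ smooth follows from the surjectivity on $A$-points proved above (lift $\xi\in G(A)$ to $F(A)$, lift further to $F(B)$ by smoothness of $F$, and push forward by $\phi$). Applying this with $\phi=r_U$ closes the chain of equivalences. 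The proof thus has no hard analytic core; the only point requiring care is the passage from the formal lifting property of a smooth morphism to set-theoretic surjectivity on every Artinian point, which is precisely what collapses $\Def^k_\EE$ onto $\Def_\EE$ and thereby trivialises the second equivalence.
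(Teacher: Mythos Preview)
Your argument is correct and follows essentially the same route as the paper: both use Corollary \ref{corollario alpha su implica r liscio} to obtain smoothness of $r_U$, deduce the equivalence $\Def_{(\EE,U)}$ smooth $\iff\Def_\EE$ smooth from standard properties of smooth morphisms, and then use surjectivity of $r_U$ on $A$-points to collapse $\Def^k_\EE$ onto $\Def_\EE$. The paper phrases the surjectivity step as ``since the obstruction is complete, each $\EE_A\in\Def^k_\EE(A)$ comes from a pair in $\Def_{(\EE,U)}(A)$'' and leaves the first equivalence as ``obvious'', whereas you spell out both points explicitly; but the underlying logic is identical.
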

\begin{proof}
From Corollary \ref{corollario alpha su implica r liscio}, the two equivalent hypotheses imply that the forgetful morphism $r_U$ is smooth. Then, the first equivalence is obvious.  
As regard the second equivalence, since the obstruction is complete, each $\EE_A \in \Def_{\EE}^k(A)$ comes from a pair $(\EE_A, i_A) \in \Def_{(\EE,U)}(A)$, for every $A\in \Art_\kk$. The above argument obviously implies  the equivalence between the smoothness of $\Def_{(\EE,U)}$ and $\Def_\EE^k$ . 
\end{proof}

\begin{proposition}  \label{prop.H2=0 liscezza}
Let $X$ be a smooth projective variety over
an algebraically closed field  $\mathbb{K}$ of characteristic zero, such that $H^i(X,\OO_X)=0$ for $i=1,2$.
If there exists an $U \in  Gr(k,H^0(X, \EE))$ such that $H^2(\Tot(H(\VV)^\Delta))=0$, then both the functors $\Def_{(\EE,U)}$ and $\Def_\EE^k$ are smooth. 
\end{proposition}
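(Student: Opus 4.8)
The plan is to deduce this from the immediately preceding Proposition \ref{prop.alpha sur equivalenza di liscezze}, after observing that its hypothesis is forced by the vanishing $H^2(\Tot(H(\VV)^\Delta))=0$. The first step is to note that this vanishing already makes $\Def_{(\EE,U)}$ smooth. Indeed, by Theorem \ref{thm. def (E,U) usando tot} the functor $\Def_{(\EE,U)}$ is equivalent to $\Del_{\Tot(H(\VV)^\Delta)}$, hence it is controlled by the dgLa $\Tot(H(\VV)^\Delta)$, whose obstructions are contained in $H^2(\Tot(H(\VV)^\Delta))$; as this group vanishes by assumption, the (complete) obstruction theory is trivial and $\Def_{(\EE,U)}$ is unobstructed, that is, smooth.

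Next I would read off the surjectivity of $\alpha$ directly from the exact sequence \eqref{eq.theorem}. The relevant segment is
\[ H^1(X,\EE nd(\EE)) \stackrel{\alpha}{\longrightarrow} \Hom(U, H^1(X,\EE)) \longrightarrow H^2(\Tot(H(\VV)^\Delta)). \]
Since the rightmost term is zero, exactness at $\Hom(U, H^1(X,\EE))$ forces $\im\alpha = \Hom(U, H^1(X,\EE))$, i.e. $\alpha$ is surjective. This is precisely the hypothesis of Proposition \ref{prop.alpha sur equivalenza di liscezze} for the same subspace $U$ (equivalently $\Hom(U, H^1(X,\EE))=0$, in the form used in Corollary \ref{corollario alpha su implica r liscio}).

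Finally, applying Proposition \ref{prop.alpha sur equivalenza di liscezze} with this $U$, the surjectivity of $\alpha$ yields the equivalences
\[ \Def_\EE \text{ smooth} \iff \Def_{(\EE,U)} \text{ smooth} \iff \Def_\EE^k \text{ smooth}. \]
Combined with the first step, in which $\Def_{(\EE,U)}$ was shown to be smooth, this gives the smoothness of both $\Def_{(\EE,U)}$ and $\Def_\EE^k$, as claimed.

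The only genuinely non-formal point is the middle step: one must orient the exact sequence \eqref{eq.theorem} in the correct direction to see that the vanishing of the obstruction space $H^2(\Tot(H(\VV)^\Delta))$ is not merely what renders $\Def_{(\EE,U)}$ smooth, but is simultaneously exactly what guarantees that $\alpha$ is surjective, so that the forgetful functor $r_U$ is smooth and Proposition \ref{prop.alpha sur equivalenza di liscezze} becomes applicable. Everything else is a direct invocation of the preceding results, so I do not anticipate any further obstacle.
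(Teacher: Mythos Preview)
Your proof is correct and follows essentially the same line as the paper's: both use $H^2(\Tot(H(\VV)^\Delta))=0$ to conclude simultaneously that $\Def_{(\EE,U)}$ is unobstructed and, via the exact sequence \eqref{eq.theorem}, that $\alpha$ is surjective (so $r_U$ is smooth), and then deduce smoothness of $\Def_\EE^k$. The only cosmetic difference is that the paper argues directly from ``$\Def_{(\EE,U)}$ smooth and $r_U$ smooth'' to ``$\Def_\EE^k$ smooth'', whereas you route this last step through the equivalences of Proposition~\ref{prop.alpha sur equivalenza di liscezze}; the content is the same.
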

\begin{proof}
Since $H^2(\Tot(H(\VV)^\Delta))=0$, the functor $\Def_{(\EE,U)}$ is smooth and the relative obstruction to $r_U$ is zero, thus $r_U$ is smooth too.
These two properties assure that $\Def_\EE^k$ is smooth too. 
\end{proof}


\begin{thebibliography}{BGMMN07}
\bibitem[ACGH85]{ACGH} E. Arbarello, M. Cornalba, P. A. Griffiths, J. Harris,
\emph{Geometry of algebraic curves,} Comprehensive Studies in
Math, \textbf{267}, Springer-Verlag, New York Berlin,  (1985); https://doi.org/10.1007/978-1-4757-5323-3.

\bibitem[Ba17]{bandiera} R. Bandiera, 
\emph{Deligne-Getzler $\infty$-groupoid}, arXiv:1705.02880.

\bibitem[B24]{Budur} N. Budur, \emph{Local structure of theta divisor and related loci of generic curves}, arXiv:2311.09250.

\bibitem[BR19]{Budur-Rubio} N. Budur, M. Rubi\`o, \emph{L-infinity pairs and applications to singularities}, 
Adv. Math., \textbf{354}, (2019), 106754, 1-40; https://doi.org/10.1016/j.aim.2019.106754.

\bibitem[BW15]{Budur-Wang} N. Budur, B. Wang, \emph{Cohomology jump loci of differential graded Lie algebras,} Compos. Math., \textbf{151}, (2015), no. 8, 1499-1528; https://doi.org/10.1112/s0010437x14007970.

\bibitem[FIM12]{FIM} D. Fiorenza, D. Iacono, E. Martinengo, \emph{Differential graded Lie algebras controlling infinitesimal deformations of coherent sheaves}, J. Eur. Math. Soc. (JEMS), \textbf{14}, (2012), 521-540; https://doi.org/10.4171/jems/310.

\bibitem[FMM12]{FMM} D. Fiorenza, M. Manetti, E. Martinengo, \emph{Cosimplicial DGLAs in deformation theory}, Comm. in
Algebra, \textbf{40},  (2012), 2243-2260; https://doi.org/10.1080/00927872.2011.577479.

\bibitem[Fu03]{Fuk} K. Fukaya, \emph{Deformation theory, homological algebra and mirror symmetry,} Geometry and physics of branes (Como, 2001), Ser. High Energy Phys. Cosmol. Gravit., IOP Bristol, (2003), 121-209.; https://doi.org/10.1201/9781420034295-8.


\bibitem[Hin97]{hinich}
V.~Hinich, \emph{Descent of Deligne groupoids,}
 Int. Math. Res. Not. IMRN,  1997,  no. 5, 223--239.

\bibitem[Ho73]{HorikawaI} E. Horikawa,
\emph{On deformations of holomorphic maps I,} J. Math. Soc.
Japan, \textbf{25} (No.3), (1973), 372-396;
https://doi.org/10.2969/jmsj/02530372.

\bibitem[Ho74]{HorikawaII} E. Horikawa,
\emph{On deformations of holomorphic maps II,} J. Math. Soc.
Japan, \textbf{26} (No.4), (1974),
647-667; https://doi.org/10.2969/jmsj/02640647.


\bibitem[Ho76]{HorikawaIII} E. Horikawa,
\emph{On deformations of holomorphic maps III, }   Math. Ann.,
{\bf 222}, (1976), 275-282; https://doi.org/10.1007/bf01362584.


\bibitem[Ia06]{Dona.Tesi}  D. Iacono, \emph{Differential Graded Lie Algebras and Deformations of Holomorphic Maps.} Phd Thesis, Universit\`a degli Studi di Roma  \lq\lq La Sapienza\rq\rq(2006). 

\bibitem[Ia08]{Dona.def maps} D. Iacono, \emph{$L_\infty$-algebras and deformations of holomorphic maps.} Int. Math. Res. Not. IMRN, \textbf{8}, (2008), 36 pp; https://doi.org/10.1093/imrn/rnn013.
 
\bibitem[Ia10]{dona.rendiconti} D. Iacono, \emph{Deformations of algebraic subvarieties.} Rend. Mat. Appl., (7) \textbf{30} (1), (2010) 89-109.

\bibitem[IM23]{DE Brill-Noether} D. Iacono, E. Martinengo, \emph{On the local structure of Brill-Noether locus},  to appear in Rend. Semin. Mat. Univ. Padova; https://doi.org/10.4171/rsmup/155.

\bibitem[Man99]{Man Pisa} M. Manetti, \emph{Deformation theory via differential graded Lie algebras,} Seminari di Geometria Algebrica 1998-1999, (1999), Scuola Normale Superiore.
\bibitem[Man04]{Man Roma} M. Manetti, \emph{Lectures on deformations of complex manifolds,} Rend. Mat. Appl.,    (7) \textbf{24} (2004), 1-183.
 

\bibitem[Man09]{ManettiSeattle}
M. Manetti, \emph{Differential graded Lie algebras and  formal
deformation theory,} in \emph{Algebraic Geometry: Seattle 2005,}
Proc. Sympos. Pure Math., \textbf{80}, (2009), 785-810; https://doi.org/10.1090/pspum/080.2/2483955.

\bibitem[Man22]{Manetti.LMDT} M.Manetti, \emph{Lie methods in deformation theory}, Springer-Verlag, New
York Berlin, (2022); https://doi.org/10.1007/978-981-19-1185-9.


\bibitem[Mar08]{elenatesi} E. Martinengo,
\emph{Higher brackets and
Moduli space of vector bundles,} Phd. Thesis, Roma, (2008).

\bibitem[Mar09]{ElenaFormal} E. Martinengo,
\emph{Local structure of Brill-Noether strata in the moduli space of flat stable bundles}, Rend. Semin. Mat. Univ. Padova, \textbf{121}, (2009), 259-280; https://doi.org/10.4171/rsmup/121-16.

\bibitem[Se06]{Sernesi} E. Sernesi,
\emph{Deformations of Algebraic Schemes.} Grundlehren der
mathematischen Wissenschaften, \textbf{334}, Springer-Verlag, New
York Berlin, (2006); \\
https://doi.org/10.1007/978-3-540-30615-3.
\end{thebibliography}
\end{document}